\newcommand{\R}{{\mathbb{R}}}
\newcommand{\al}{\alpha}
\newcommand{\ga}{\gamma}
\newcommand{\om}{\omega}
\newcommand{\beq}{\begin{equation}}
\newcommand{\eeq}{\end{equation}}
\newcommand{\bdm}{\begin{displaymath}}
\newcommand{\edm}{\end{displaymath}}
\newcommand{\ba}{\begin{align}}
\newcommand{\ea}{\end{align}}
\newcommand{\bpf}{\begin{proof}}
\newcommand{\epf}{\end{proof}}
\newcommand{\eps}{\epsilon}
\newcommand{\Id}{\operatorname{Id}}
\newcommand{\cB}{\mathcal{B}}
\newcommand{\cG}{\mathcal{G}}
\newcommand{\del}{\partial}
\newcommand{\half}{\frac{1}{2}}
\begin{document}

\title*{A Note on Singularity Formation for a Nonlocal Transport Equation}
\author{Vu Hoang and Maria Radosz}
\institute{Vu Hoang \at Department of Mathematics, University of Texas at San Antonio, One UTSA Circle, TX 28249, \email{duynguyenvu.hoang@utsa.edu}
\and Maria Radosz \at Department of Mathematics, University of Texas at San Antonio, One UTSA Circle, TX 28249, \email{maria\_radosz@hotmail.com}}
%
%
\maketitle

\abstract*{The $\al$-patch model is used to study aspects of fluid equations. We show that solutions of this model form singularities in finite time and give a characterization of the solution profile at the singular time.}

\abstract{The $\al$-patch model is used to study aspects of fluid equations. We show that solutions of this model form singularities in finite time and give a characterization of the solution profile at the singular time.}

%
\section{Introduction}
%

One of the most fundamental equations in modeling the motion of fluids and gases is the transport equation
\begin{equation}\label{eq1}
\omega_t+u\cdot \nabla \omega=0,
\end{equation}
here written using the vorticity $\om$.  The velocity $u$ may depend on $\omega$, in which case \eqref{eq1} is called an active scalar equation.

When the relationship $u[\om]$ is specified, \eqref{eq1} gives rise to many important models in fluid dynamics. $u[\om]$ is often called a Biot-Savart law. Here are some examples of particular transport equations, which are also active scalar equations. 
Take for example,
\begin{equation}\label{eq2}
u=\nabla^{\perp}(-\triangle)^{-1}\omega,
\end{equation}
where $\nabla^{\perp}=(-\partial_y,\partial_x)$ is the perpendicular gradient. Equations \eqref{eq1} and \eqref{eq2} are the vorticity form of 2D Euler equations.
As another example, one can take
$$
u=\nabla^{\perp}(-\Delta)^{-\frac{1}{2}}\omega.
$$
Then \eqref{eq1} becomes the surface quasi-geostrophic (SQG) equation. The SQG equation has important applications in geophysics and atmospheric sciences \cite{Ped}. Moreover it serves as a toy model for the 3D-Euler equations (see \cite{constantin1994formation} for more details).

A question of great importance is whether solutions for these equations form singularities in finite time. 

A game-changing observation in dealing with some two- and three dimensional models for fluid motion is that imposing certain symmetries on the solution of \eqref{eq1} on simple domains like a half-disc or a quadrant creates a special kind of flow called a \emph{hyperbolic flow}. We will not go into details here but refer to \cite{Hoang2D,HouLuo2,HouLuo3,kiselev2013small} for more information. In this hyperbolic flow scenario, it seems that the behavior of the fluid on and near the domain boundary plays the most important part in creating either blowup or strong gradient growth. One-dimensional models capturing this behavior are therefore an essential tool for investigating possible blowup mechanisms without the additional complications of more-dimensional equations. We refer to \cite{sixAuthors} for discussion of the aspects relating to the hyperbolic flow scenario.  For 1d models of fluid equations, see also \cite{sixAuthors, CKY, CLM, CCF}. 

In this paper, we will study a 1D model of (\ref{eq1}) on $\R$ with the following Biot-Savart law:
\begin{eqnarray}
\label{eq4} u&=&(-\Delta)^{-\frac{\alpha}{2}}\omega=-c_\alpha\int_{\R}|y-x|^{-(1-\alpha)}\omega(t,y)~ dy.
\end{eqnarray}

This model is called $\alpha$-patch model and has also been treated in \cite{dong2014one} with an additional viscosity term causing dissipation. Local existence of the solution and the existence of a blowup for the viscous $\alpha$-patch model were given in \cite{dong2014one}. The existence of blowup was obtained by using energy methods. In contrast, this paper deals with more geometric aspects of singularity formation for the inviscid model - such as the final profile of the solution at the singular time.

Regularity-wise, the $\alpha$-patch model is less regular than 1D Euler $u_x = H \om$ and more regular than the C\'ordoba-C\'ordoba-Fontelos (CCF) model $u=H\om$ (see \cite{CCF}). The latter is a 1D analogue of the SQG equation.

%
\section{One dimensional $\alpha$-patch model and main results}
%

We study the transport equation
\beq\label{eq_main}
\omega_t+u[\om]\omega_x=0
\eeq
in one space dimension for the unknown function $\om(t, x) : [0, T]\times \R \to \R$ with sufficiently smooth initial data $\om(\cdot,0)=\om_0$. The velocity field is given by the nonlocal Biot-Savart law
\beq
u(x,t)= (-\Delta)^{-\alpha/2}\omega(x,t) =-c_\alpha\int_{\R}|y-x|^{-(1-\alpha)}\omega(t,y)~dy,\qquad\alpha\in(0,1).
\eeq
 The $\al$-patch model becomes the 1d model for the 2d Euler equation in the limit $\al\to 1$ with velocity field given by
\begin{align}
    u(x,t)= (-\Delta)^{-1/2}\omega(x,t).
\end{align}

For convenience, we will assume the constant $c_\alpha$ associated with the fractional Laplacian is $1$, and we write $\ga = 1-\al$.

We consider classical solutions where $\om(\cdot, x)$ is odd in $x$ and such that
\begin{align}
    \|\om(t, \cdot)\|_{q} + \|\om_x(t, \cdot)\|_{q-1} < \infty
\end{align}
for all $t\in [0, T]$. Here, the norm $\|\cdot\|_s$ is defined by
\begin{align}
    \|\om\|_s := \sup_{x \geq 0}|\om(x)|(1+x)^{-s}.
\end{align}
Our main concern will be the question if more information can be deduced about the nature of the singularity formation of \eqref{eq_main}. In particular, we are interested  in  the formation of an \emph{odd cusp} (see Fig.~\ref{cusps}), i.e. the possibility that a smooth solution becomes singular at the time $t=T_s>0$ in a way such that
\begin{align}
    \om \sim \operatorname{sign}(x)|x|^p\quad (t\to T_s)
\end{align}
with some power $p\in (0, 1)$. The sense in which this holds will be made clear below. Another result on cusp formation can be found in \cite{HoangRadoszCusp}.

\begin{figure}[htbp]
\begin{center}
\includegraphics[scale=0.36]{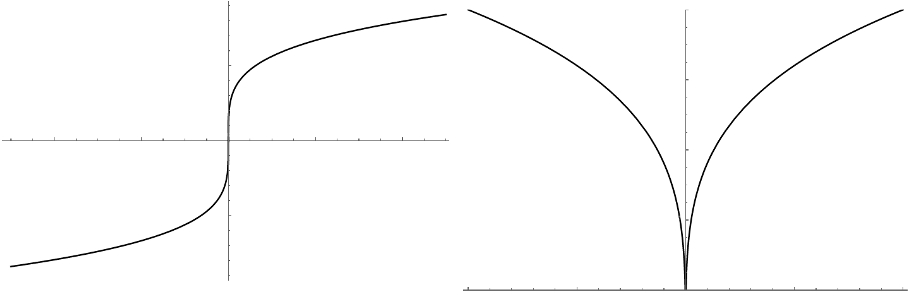}
\caption{Left: odd cusp, Right: (even) cusp. } \label{cusps}
\end{center}
\end{figure}

We shall take odd $C^2$-smooth initial data: 
\begin{align}\label{eq_sym1}
    \om(0, -x) = - \om(0, x)=\om_0(x) \quad (x\in \R)
\end{align}
This implies that $\om(t,\cdot)$ is odd for all $t$ (as long as a smooth solution exists) and also that $u[\om](t, \cdot)$ is odd as well. Moreover, $\om_0$ is such that
\begin{align}\label{eq_init1}
    \|\om_0\|_{q} + \|\del_x\om_0\|_{q-1} + \|\del_x^2 \om_0\|_{q-2} < \infty
\end{align}
for some $0 < q < \ga$.
Note that due to the symmetry, the velocity field is well-defined for $\om$ satisfying \eqref{eq_init1}. This is because
\begin{align}\label{eq_form1}
    u[\om](t, x) = -\int_{0}^\infty K(x, y)\om(t,y)~dy
\end{align}
where 
\begin{align}
    K(x, y) = \left(\frac{1}{|y-x|^\ga}-\frac{1}{|x+y|^\ga}\right) \geq 0
\end{align}
and hence $|K(x, y)| \leq C(x)|y|^{-1-\ga}$ for $y\geq 2x$. This implies that \eqref{eq_form1}  converges for all $x>0$.  Note in particular that
$u[\om](x)\leq 0$ for $x\geq 0$, if $\om(x)\geq 0$ for $x\geq 0$. 

\begin{theorem}[Local Existence and Uniqueness]\label{thm:shorttimeexist}Given $C^2$-initial data $\om_0$ satisfying \eqref{eq_init1}, there exists a $T>0$ and a unique solution $\om$ of \eqref{eq_main} defined on $[0,T]\times\R$  so that 
\begin{itemize}
    \item $\om(0, \cdot)=\om_0(\cdot)$
    \item $\om(t, \cdot)$ is odd for all $t\in [0, T)$.
    \item $\om \in C^1([0, T]\times\R)$
\end{itemize}
\end{theorem}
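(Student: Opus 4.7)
The plan is to prove existence by a Picard iteration based on the method of characteristics, with uniqueness following from a Gronwall argument on the transport equation satisfied by the difference of two solutions. I would fix $M := 2(\|\om_0\|_q + \|\pa_x\om_0\|_{q-1})$ and consider the complete metric space $X_{T,M}$ of odd functions $\om \in C([0,T]; C^1(\R))$ satisfying $\sup_{t\in [0,T]}(\|\om(t,\cdot)\|_q + \|\om_x(t,\cdot)\|_{q-1})\leq M$, equipped with the metric $d(\om_1,\om_2) := \sup_{t\in [0,T]} \|\om_1(t,\cdot)-\om_2(t,\cdot)\|_q$. The iteration would be $\om^{n+1}(t,x) := \om_0(\Psi^n(t,x))$, where $\Psi^n(t,\cdot) := (\Phi^n(t,\cdot))^{-1}$ and $\Phi^n$ is the flow of $u[\om^n]$, namely $\pa_t\Phi^n = u[\om^n](t,\Phi^n)$ with $\Phi^n(0,x)=x$.

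The key analytic inputs are pointwise bounds of the form
\begin{align}
|u[\om](x)| &\leq C(1+x)^{q+\al}\|\om\|_q, \\
|u_x[\om](x)| &\leq C(1+x)^{q-\ga}(\|\om\|_q + \|\om_x\|_{q-1}),
\end{align}
obtained by splitting the representation $u[\om](x) = -\int_0^\infty K(x,y)\om(y)\,dy$ into low-frequency ($0\leq y\leq x/2$), near-diagonal ($x/2\leq y\leq 2x$), and far-field ($y\geq 2x$) pieces. In the far field the cancellation $|K(x,y)|\leq Cxy^{-1-\ga}$ together with $q<\ga$ gives convergence; near $y=0$ the bound $|K(x,y)|\leq Cyx^{-1-\ga}$ combined with $\om(0)=0$ (from oddness) gives control; and near the diagonal the integrability of $|y-x|^{-\ga}$ (since $\ga<1$) suffices. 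For $u_x[\om]$, na\"ive differentiation yields a non-integrable kernel of order $|y-x|^{-1-\ga}$, so one first writes $K(x,y)\om(y) = K(x,y)(\om(y)-\om(x)) + \om(x)K(x,y)$, uses $|\om(y)-\om(x)|\leq |y-x|\cdot\|\om_x\|_\infty$ (finite because $q-1<0$ forces $\|\om_x\|_\infty \leq \|\om_x\|_{q-1}$), and differentiates only after this regularization. Since $q-\ga<0$, the second estimate shows $u_x[\om]$ is globally bounded, so $u[\om]$ is globally Lipschitz in $x$ with sublinear growth, and the flow $\Phi^n$ is defined on all of $[0,T]\times\R$, bi-Lipschitz with constants close to $1$ for small $T$.

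I would then verify that for $T$ sufficiently small the iteration map preserves $X_{T,M}$ and is a strict contraction in $d$. Preservation of $\|\om^{n+1}\|_q$ follows from $\om^{n+1}(t,\Phi^n(t,x))=\om_0(x)$ since the flow distorts the weight $(1+x)^q$ by a factor close to $1$; preservation of $\|\om^{n+1}_x\|_{q-1}$ follows from differentiating $\om^{n+1}(t,x)=\om_0(\Psi^n(t,x))$ and bounding $\pa_x\Psi^n$ via the ODE with coefficient $-u_x[\om^n]\circ\Phi^n$. Oddness is preserved automatically: $\om^n$ odd implies $u[\om^n]$ odd, hence $\Phi^n(t,-x)=-\Phi^n(t,x)$, and $\om^{n+1}$ inherits oddness from $\om_0$. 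The contraction estimate follows from $\om^{n+1}-\om^n = \om_0(\Psi^n)-\om_0(\Psi^{n-1})$ combined with $|\Psi^n-\Psi^{n-1}|\leq T\sup\|u[\om^n]-u[\om^{n-1}]\|$, the latter controlled with appropriate weights by $\|\om^n-\om^{n-1}\|_q$ via the same splitting. Uniqueness is then obtained from the equation $w_t + u[\om^{(1)}]w_x = -(u[\om^{(1)}]-u[\om^{(2)}])\om^{(2)}_x$ for $w:=\om^{(1)}-\om^{(2)}$, transporting along characteristics of $u[\om^{(1)}]$ and applying Gronwall to $\|w(t,\cdot)\|_q$.

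The main obstacle I anticipate is the rigorous derivation of the $u_x[\om]$ bound: since $\pa_x K(x,y)$ has a diagonal singularity of order $|y-x|^{-1-\ga}$ that precludes differentiating under the integral, the subtraction-cancellation device above is essential, and extra care is required near $x=0$ where the region splitting degenerates and only the oddness $\om(0)=0$ restores integrability. Once this analytic lemma is in hand, the remainder of the argument is a routine application of the Banach fixed-point theorem and Gronwall's inequality.
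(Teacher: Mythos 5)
Your proposal is correct in outline but takes a genuinely different route from the paper. The paper does not iterate on the original equation: it first mollifies the kernel ($k_\eps$), solves each regularized problem globally by a contraction argument for the flow map $\Phi=\Id+\widehat\Phi$ in a space tracking $\widehat\Phi$, $\widehat\Phi_z$ and $\widehat\Phi_{zz}$ --- where the $\eps$-dependent estimates \eqref{est_v1}, controlling $\del_x v_\eps[\om]$ and $\del_{xx}v_\eps[\om]$ by $\|\om\|_p$ alone, are what close the contraction without losing derivatives --- and then removes the regularization via the uniform a-priori bounds of Proposition \ref{prop4} and Arzel\`a--Ascoli. You instead run Picard directly on the unregularized equation and dodge the loss of derivatives by the standard device of contracting only in the low norm $\sup_t\|\cdot\|_q$ while carrying the high norm as a constraint. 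That is a legitimate alternative (it is essentially the Majda--Bertozzi treatment of Euler), and it has the advantage of delivering uniqueness of the actual solution directly, which the paper's compactness limit does not literally provide.

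Two points need attention. First, your space $X_{T,M}$ is not complete under $d$: a $d$-Cauchy sequence with $\|\om_x\|_{q-1}\le M$ converges to a Lipschitz function that need not be $C^1$, so to conclude that the fixed point is a classical $C^1$ solution you must either also contract the derivatives --- which forces you to propagate $\|\om_{xx}\|_{q-2}$ (available from hypothesis \eqref{eq_init1}, and exactly the quantity Proposition \ref{prop4} tracks), since estimating $\del_x u[\om^{n}]\circ\Phi^n-\del_x u[\om^{n-1}]\circ\Phi^{n-1}$ brings in $\del_{xx}u$ --- or insert a compactness step of your own. As written this is a gap, though a standard and fixable one. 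Second, your subtraction device for $\del_x u[\om]$ is more complicated than necessary: since $u[\om]$ is a convolution, one integrates by parts and puts the derivative on $\om$,
\begin{align}
\del_x u[\om](x)=-\int_\R |x-y|^{-\ga}\,\om_y(y)\,dy,
\end{align}
which is absolutely convergent under $\|\om_x\|_{q-1}<\infty$ and is precisely what Proposition \ref{prop_est_u} does. Your version moreover still leaves the term $\om(x)\,\del_x\!\int_0^\infty K(x,y)\,dy$, which requires either the antisymmetry of $\del_x|x-y|^{-\ga}$ about $y=x$ or the explicit scaling identity $\int_0^\infty K(x,y)\,dy=c_\ga x^{1-\ga}$ to be made rigorous.
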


Define now
\begin{align}\label{def_phi}
    \phi(t, x) = a^p(t) f\left(\frac{x}{a(t)}\right)
\end{align}
with $f(z) = (z+1)^p - 1$.
The function $\phi$ serves a barrier for solutions of \eqref{eq_main}, as shown by our main result below. The function $a(t)$ controls the evolution of the barrier's shape in time and will also be specified in Theorem \ref{thm_MR}. Suppose that $a(t)\to 0$, as $t\to T^*$. Then as $t\to T^*$
\begin{align}
    \phi(t, x) \to x^p
\end{align}
pointwise for $x>0$  and also uniformly on $x\geq c$ for any $c > 0$. Note moreover that 
\begin{align}\label{eq_phi_less_than_x_plus_a_0}
    \phi(t, x) \leq (x+a_0)^p \quad (t \geq 0, x\geq 0)
\end{align}
provided $0 < a(t) \leq a_0$.

\begin{theorem}[Singularity formation]\label{thm_MR}
Let $p=\half \ga$. There exists a $c_0>0$ such that the following implication is true:
If $a: [0, T(a_0)]\to \R$ solves
\begin{align}
    \dot a = - c_0 a^{1-p}, ~~a(0) = a_0 >0
\end{align}
with $a_0 < 1$ and $T(a_0) > 0$ being the unique time such that
\begin{align}
    a(T(a_0)) = 0, ~~a(t) > 0~\text{for}~t < T(a_0).
\end{align}
and if $\om$ is a smooth solution of \eqref{eq_main} such that
\begin{align}\label{assumption}
\begin{split}
    &\|\om(0,\cdot)\|_{p} + \|\om_x(0,\cdot)\|_{p-1} < \infty, \\
    &\om(0, x) > (1+\eps)\phi(0, x)\quad (x>0)
\end{split}
\end{align}

for $\eps>0$ satisfying
\begin{align}\label{choice_eps}
    \eps > (1-a_0^p)^{-1} - 1.
\end{align}
Then $\bar T \leq T(a_0)$ and
\begin{align}
    \om(t, x) > \phi(t, x)\quad (x>0)
\end{align}
for $0\leq t < \bar T$, where $\bar T>0$ denotes the maximal lifetime of the smooth solution.  Provided $\om$ does not break down earlier, $\om$ forms at least a cusp at time $t = T(a_0)$ (or a potentially stronger singularity, see Fig. \ref{singularities}).
\end{theorem}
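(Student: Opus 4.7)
The plan is a barrier/comparison argument built on two pillars. First, the Biot-Savart law \eqref{eq_form1} has a monotonicity property: because the kernel $K(x, y)$ is nonnegative, $0 \leq \om_1(y) \leq \om_2(y)$ on $[0, \infty)$ implies $u[\om_1](x) \geq u[\om_2](x)$ for $x > 0$ (both nonpositive). Second, I will show that $\phi$ is a \emph{strict} sub-barrier, i.e.\ that the defect $D[\phi] := \phi_t + u[\phi]\phi_x$ is strictly negative on $(0, T(a_0)) \times (0, \infty)$ for an appropriate choice of $c_0 > 0$. Given both, a first-contact argument propagates the initial strict inequality $\om(0, \cdot) > \phi(0, \cdot)$ forward in time on the smooth lifetime $[0, \bar T)$; the blow-up $\phi(T(a_0)^-, x) = x^p$ then forces $\om$ to lose $C^1$-regularity at $x = 0$ no later than $t = T(a_0)$.

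The defect computation is the main technical step. Using the self-similar ansatz $\phi(t, x) = a^p f(x/a)$ with $f(z) = (z+1)^p - 1$, the ODE $\dot a = -c_0 a^{1-p}$, the substitution $y = as$, and the identity $p + 1 - \ga = 1 - p$ (valid since $p = \ga/2$), one finds
\begin{align}
D[\phi](t, x) = p(z+1)^{p-1}\Bigl\{c_0\bigl[(z+1)^{1-p} - 1\bigr] - g(z)\Bigr\}, \qquad z := x/a(t),
\end{align}
where
\begin{align}
g(z) := \int_0^\infty \bigl[|s-z|^{-\ga} - (s+z)^{-\ga}\bigr]\bigl[(s+1)^p - 1\bigr]\, ds.
\end{align}
Thus $D[\phi] < 0$ reduces to the pointwise scalar inequality $g(z) > c_0[(z+1)^{1-p} - 1]$. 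Expanding the kernel gives $g(z) = 2\ga z \int_0^\infty [(s+1)^p - 1] s^{-\ga - 1}\, ds + o(z)$ as $z \to 0^+$, matching $(1-p) z$ up to a positive constant, and the rescaling $s = zt$ gives $g(z) = z^{1-p}\int_0^\infty [|t-1|^{-\ga} - (t+1)^{-\ga}] t^p\, dt + o(z^{1-p})$ as $z \to \infty$, matching $z^{1-p}$ up to a positive constant. Since the ratio $g(z)/[(z+1)^{1-p} - 1]$ is continuous and strictly positive on $(0, \infty)$ with finite positive endpoint limits, its infimum $C^* > 0$; any $c_0 \in (0, C^*)$ then secures $D[\phi] < 0$.

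For the propagation step, set $T^* := \sup\{t \in [0, \bar T) : \om(s, x) > \phi(s, x)\ \forall s \in [0, t],\, x > 0\}$, which is positive by \eqref{assumption} and continuity. Suppose, for contradiction, $T^* < \min(\bar T, T(a_0))$. By continuity $\om(T^*, \cdot) \geq \phi(T^*, \cdot)$ on $[0, \infty)$, with equality attained somewhere. At an interior touching point $x_1 > 0$, $\om - \phi$ has a local minimum, giving tangency $\om_x(T^*, x_1) = \phi_x(T^*, x_1)$ and $\partial_t(\om - \phi)(T^*, x_1) \leq 0$. Substituting $\om_t = -u[\om]\om_x$ and using $u[\om(T^*, \cdot)] \leq u[\phi(T^*, \cdot)] \leq 0$ (licit since $\om \geq \phi \geq 0$ on $[0, \infty)$ at time $T^*$) together with $\phi_x > 0$ yields $D[\phi](T^*, x_1) \geq 0$, contradicting the previous paragraph. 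The two degenerate scenarios---contact at $x = 0^+$ (meaning $\om_x(T^*, 0) = \phi_x(T^*, 0)$) and loss of positivity as $x \to \infty$---require separate treatment: the former via an ODE for $\om_x(t, 0) - \phi_x(t, 0)$ obtained by differentiating the transport equation at $x = 0$ (using $\om_{xt}(t, 0) = -u_x(t, 0)\om_x(t, 0)$, valid by oddness) together with the initial gap \eqref{choice_eps}; the latter by tracking the $(1+\eps)$-margin of \eqref{assumption} along characteristics, leveraging $\phi(t, x) \leq (x + a_0)^p$ from \eqref{eq_phi_less_than_x_plus_a_0}.

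Once $\om > \phi$ is established on $[0, \bar T) \times (0, \infty)$, the conclusion is immediate: if $\bar T > T(a_0)$, taking $t \nearrow T(a_0)$ gives $\om(T(a_0), x) \geq x^p$ for $x > 0$, hence $\om(T(a_0), x)/x \geq x^{p-1} \to +\infty$ as $x \to 0^+$, contradicting $C^1$-regularity of $\om$ at $t = T(a_0)$. Hence $\bar T \leq T(a_0)$. If $\bar T = T(a_0)$, the pointwise lower bound $\om(\bar T, x) \geq x^p$ exhibits at least a cusp; if $\bar T < T(a_0)$, a potentially stronger singularity has developed earlier. The hard parts will be the defect estimate---in particular matching the leading-order constants of $g$ at both endpoints to those of $(z+1)^{1-p} - 1$---and the two degenerate-boundary arguments, in which the specific form of \eqref{choice_eps} does its work; it is fitting that the delicate case is at $x = 0$, since this is precisely where the cusp forms.
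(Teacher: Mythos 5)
Your proposal is correct and follows essentially the same route as the paper: your scalar inequality $g(z) > c_0\bigl[(z+1)^{1-p}-1\bigr]$ is algebraically identical to the paper's condition $c_0 < U(z)f'(z)/(zf'(z)-pf(z))$ (with $g=U$), your endpoint asymptotics reproduce Proposition \ref{prop2}, and your treatment of the two degenerate contact scenarios (slope comparison at $x=0$ via the transport equation, and the $(1+\eps)$-margin along characteristics for large $x$) mirrors Lemmas \ref{lem2} and \ref{lem1} respectively. The only cosmetic difference is that at the interior touching point you use tangency at a spatial minimum, whereas the paper backtracks a particle trajectory; both yield the same contradiction $\phi_t+u[\phi]\phi_x\geq 0$.
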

Note that a function $a(t)$ with the properties in the Theorem exists for any given $a(0) > 0$. 
 Our theorem does not exclude the possibility of a singularity forming before $T(a_0)$. However, we offer the following conjecture.
\begin{conjecture}
If $\om_{xx}(0, x) < 0$ for $x > 0$, then singularity formation can only happen at $x = 0$, i.e. if $\sup_{0\leq t < \hat{T}}|\om_x(t, 0)| < \infty$ for some $ \hat{T} > 0$, then the smooth solution can be continued past $t=\hat{T}$. In this case we have that the profile of $\om(t, \cdot)$ converges to an odd cusp at the singularity. 
\end{conjecture}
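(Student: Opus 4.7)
The conjecture consists of a conditional regularity claim (if $\om_x(t,0)$ stays bounded up to $\hat T$ then the smooth solution extends past $\hat T$) and a profile claim (the only admissible singularity is an odd cusp). My strategy is to show that the concavity hypothesis propagates in time: once we know $\om_{xx}(t,x)<0$ for $x>0$, the evenness of $\om_x$ forces $\om_x(t,x)\leq \om_x(t,0)$ on $\R$, so a Beale--Kato--Majda-type continuation criterion upgrades the hypothesis $\sup_{t<\hat T}|\om_x(t,0)|<\infty$ to persistence of the $C^1$ solution. For the profile claim I would combine the lower barrier of Theorem~\ref{thm_MR} with a matching upper super-solution of the same self-similar form.

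\textbf{Sign of $u_{xx}$ and propagation of concavity.} Differentiating \eqref{eq_main} twice and writing $\frac{D}{Dt}=\del_t+u\del_x$ gives
\begin{align}
\frac{D}{Dt}\om_{xx}+2u_x\om_{xx}+u_{xx}\om_x=0.
\end{align}
Because $\del_x^2$ commutes with $(-\Delta)^{-\al/2}$ on $\R$, we have $u_{xx}=(-\Delta)^{-\al/2}\om_{xx}$ pointwise for $\om$ with the regularity of \eqref{eq_init1}, and the oddness reduction used in deriving \eqref{eq_form1} yields
\begin{align}
u_{xx}(t,x)=-\int_0^\infty K(x,y)\,\om_{xx}(t,y)\,dy,\qquad x>0.
\end{align}
Thus $\om_{xx}(t,\cdot)<0$ on $(0,\infty)$ implies $u_{xx}(t,\cdot)>0$ on $(0,\infty)$. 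Since $\om_x$ transports its sign along characteristics (from $\frac{D}{Dt}\om_x=-u_x\om_x$), the set $\{\om_x>0\}$ is open and contains a neighbourhood of the origin; on this set the forcing $-u_{xx}\om_x$ is non-positive, and Gronwall applied to the material equation for $\om_{xx}$ prevents $\om_{xx}$ from crossing zero from below. A continuous-induction argument then extends $\om_{xx}(t,x)<0$ on $(0,\infty)$ to all $t<\hat T$, which by evenness of $\om_x$ gives $\sup_{x\in\R}\om_x(t,x)=\om_x(t,0)$.

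\textbf{Continuation, profile, and the main obstacle.} A Beale--Kato--Majda-type criterion for \eqref{eq_main}, namely persistence of the $C^1$ solution while $\int_0^{\hat T}\|\om_x(t,\cdot)\|_\infty\,dt<\infty$, follows by integrating $\frac{D}{Dt}\om_x=-u_x\om_x$ along characteristics together with an estimate $\|u_x\|_\infty\leq C(\|\om\|_\infty+\|\om_x\|_\infty)$ obtained by splitting the Biot--Savart integral into near- and far-field parts using \eqref{eq_init1}. The supremum on the positive branch $\om_x>0$ is then bounded by $\om_x(t,0)$ by the previous paragraph, while the negative branch is controlled by a separate propagation of the weighted norm $\|\om_x(t,\cdot)\|_{q-1}$ along the flow. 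For the profile claim, the lower barrier of Theorem~\ref{thm_MR} already gives $\liminf_{t\to T(a_0)}\om(t,x)\geq x^p$ pointwise on $x>0$; I would then construct a matching super-solution $\bar\phi(t,x)=A^p(t)g(x/A(t))$ with $\bar\phi(0,\cdot)\geq\om_0$ and $A(t)\to 0$ at the singular time, so that squeezing yields the odd cusp $\om(t,\cdot)\to C\,\mathrm{sign}(x)|x|^p$ with $p=\half\ga$. The main obstacle is this upper barrier: the sub-solution argument of Theorem~\ref{thm_MR} exploits a favourable sign of the nonlocal forcing that does not simply reverse, so producing a self-similar super-solution with the sharp exponent $p=\half\ga$ that also dominates the given initial data is delicate and likely requires either a more refined scaling ODE than the one defining $a(t)$ or a genuinely new monotone functional controlling $\om$ from above on compact subsets of $(0,\infty)$.
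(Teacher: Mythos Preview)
This statement is explicitly labelled a \emph{Conjecture} in the paper and is not proved there; the authors offer it without argument immediately after Theorem~\ref{thm_MR}. There is therefore no proof in the paper to compare your proposal against --- you are attempting to settle an open problem, and the honest admission in your final paragraph that the upper-barrier construction is unresolved already means the proposal is incomplete by design.

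Beyond that acknowledged obstacle, there is a genuine gap in the continuation argument. Your claimed estimate $\|u_x\|_\infty\leq C(\|\om\|_\infty+\|\om_x\|_\infty)$ is not available in this setting: the data in \eqref{eq_init1} allow $\om$ to grow like $x^p$, so $\|\om\|_\infty=\infty$. The correct substitute from Proposition~\ref{prop_est_u} is $\|u_x\|_\infty\leq C\|\om_x\|_{p-1}$ (using $p<\ga$), but then one must bound the \emph{weighted} norm $\sup_{x\geq 0}(1+x)^{1-p}|\om_x(t,x)|$, and concavity only yields the pointwise bound $|\om_x(t,x)|\leq\om_x(t,0)$, which says nothing about the tail of $(1+x)^{1-p}\om_x(t,x)$ as $x\to\infty$. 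Your remark about ``a separate propagation of the weighted norm'' does not close this: the a-priori inequality in Proposition~\ref{prop4} is quadratic in the weighted norms and gives control only on a short interval, not up to a prescribed $\hat T$. Converting $\sup_{t<\hat T}\om_x(t,0)<\infty$ into control of $\|\om_x(t,\cdot)\|_{p-1}$ on $[0,\hat T)$ is the missing ingredient.

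A smaller point: for the concavity propagation you need $\om_x>0$ on $(0,\infty)$ so that the forcing $-u_{xx}\om_x$ has the right sign. This does follow once you also use $\om(t,\cdot)>0$ on $(0,\infty)$ (preserved by transport from the hypotheses of Theorem~\ref{thm_MR}) together with strict concavity, but it should be stated --- concavity by itself does not prevent $\om_x$ from changing sign, and the conjecture as written does not include the positivity of $\om$ among its hypotheses.
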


\begin{figure}[htbp]
\begin{center}
\includegraphics[scale=0.36]{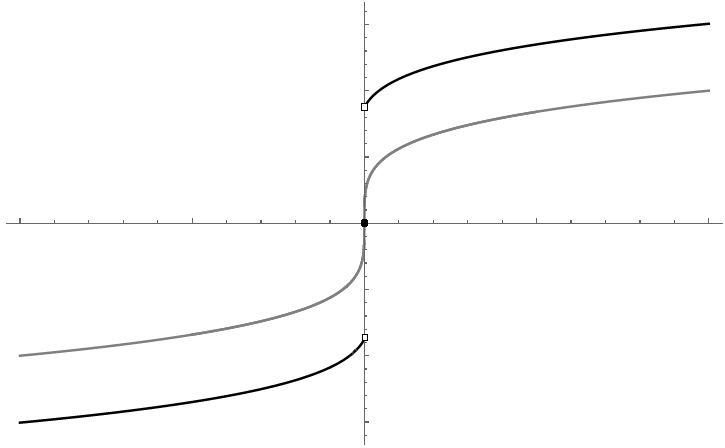}
\caption{Possible singularity fomations. Grey: Odd cusp, Black: Shock. } \label{singularities}
\end{center}
\end{figure}

%
\section{Proofs of main results}
%

\begin{proposition}\label{prop_est_u}
For all $0 < q< \ga$, we have the estimates
\begin{align}\label{est_u}
\begin{split}
    \|u[\om]\|_{1-\ga+q}&\leq C\|\om\|_{q}\\
    \|\del_{x}u[\om]\|_{-\ga+q}&\leq C\|\om_x\|_{q-1}\\
    \|\del_{xx}u[\om]\|_{-\ga+q-1}&\leq C\|\om_{xx}\|_{q-2}.
\end{split}
\end{align}
with a universal constant $C>0$.
\end{proposition}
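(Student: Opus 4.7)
The plan is to reduce all three estimates to a single base estimate for the Riesz-potential type operator, which I would prove by splitting the integration variable into three regions. Because $\om$ is odd, $\om_x$ is even, and $\om_{xx}$ is odd, the commutation $\del_x^n(-\Delta)^{-\ga/2}\om=(-\Delta)^{-\ga/2}\del_x^n\om$ (standard for Schwartz functions and extendable to the polynomial-growth class by approximation) yields the symmetric representations
\begin{align*}
u[\om](x)&=-\int_0^\infty K(x,y)\om(y)\,dy,\\
\del_x u[\om](x)&=-\int_0^\infty \widetilde{K}(x,y)\om_x(y)\,dy,\\
\del_{xx}u[\om](x)&=-\int_0^\infty K(x,y)\om_{xx}(y)\,dy,
\end{align*}
where $K(x,y)=|y-x|^{-\ga}-(x+y)^{-\ga}$ is the antisymmetric kernel already present in the definition of $u$, and $\widetilde{K}(x,y)=|y-x|^{-\ga}+(x+y)^{-\ga}$ is its ``even'' counterpart, which naturally appears when testing against an even function.

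The decisive observation is that both kernels satisfy analogous pointwise size bounds on the three regions $\{0\le y\le x/2\}$, $\{x/2\le y\le 3x/2\}$, and $\{y\ge 3x/2\}$. In the near-origin region, $K$ enjoys a first-order Taylor cancellation so that $|K(x,y)|\le C y x^{-1-\ga}$, while the even kernel only satisfies $\widetilde{K}(x,y)\le C x^{-\ga}$. In the middle region both kernels are controlled by $|y-x|^{-\ga}+x^{-\ga}$, which is integrable in $y$ since $\ga<1$. In the far-field region, Taylor expansion of the two terms in $x/y$ gives $|K(x,y)|\le C x y^{-1-\ga}$ and $\widetilde{K}(x,y)\le C y^{-\ga}$.

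With these kernel bounds in hand, each of the three inequalities reduces to a routine integration. Inserting $|\om(y)|\le\|\om\|_q(1+y)^q$ (respectively $|\om_x(y)|\le\|\om_x\|_{q-1}(1+y)^{q-1}$ and $|\om_{xx}(y)|\le\|\om_{xx}\|_{q-2}(1+y)^{q-2}$) and integrating over the three regions, the contributions line up to produce exactly the targeted weights $(1+x)^{1-\ga+q}$, $(1+x)^{q-\ga}$, and $(1+x)^{q-\ga-1}$ respectively. The condition $q<\ga$ is used precisely to ensure integrability of $(1+y)^q y^{-1-\ga}$ at infinity in the first estimate (and of the analogous integrands in the other two), while $q>0$ keeps the near-origin integrals finite when the kernel does not vanish at $y=0$.

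The main technical obstacle I expect is the rigorous justification of the commutation of $\del_x$ with $(-\Delta)^{-\ga/2}$ for functions of polynomial growth, since the obvious differentiation under the integral sign fails because $\del_x K$ is not integrable across $y=x$. I would resolve this by first working with smooth compactly supported odd approximations $\om^R$ of $\om$, performing the integration by parts on $[0,R]\setminus(x-\eps,x+\eps)$ (where the boundary contributions from the excision vanish as $\eps\to 0$ thanks to $\ga<1$), and then passing $R\to\infty$ using the kernel decay to control the tail. Once this passage is legitimized, the three-region splitting produces the stated estimates with a universal constant $C$.
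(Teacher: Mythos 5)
Your proposal is correct, and it establishes the same estimates by a route that differs in execution from the paper's. The paper exploits the exact homogeneity of the kernel: substituting $y=xz$ gives $K(x,xz)=x^{-\ga}K(1,z)$, so the whole estimate collapses to the finiteness of the single master integral $\int_0^\infty K(1,z)(1+z)^q\,dz$, which converges precisely because of the large-$z$ cancellation $K(1,z)\sim C z^{-1-\ga}$ together with $q<\ga$ (plus a separate routine check for $0\le x\le 1$). You instead prove pointwise kernel bounds on the three regions $y\le x/2$, $x/2\le y\le 3x/2$, $y\ge 3x/2$ and integrate region by region; the bounds you state ($|K|\lesssim y x^{-1-\ga}$ near the origin, $|K|\lesssim x y^{-1-\ga}$ in the far field, $\widetilde K\lesssim x^{-\ga}$ and $\lesssim y^{-\ga}$ respectively for the even kernel) are exactly right, and the resulting integrals do produce the weights $(1+x)^{1-\ga+q}$, $(1+x)^{q-\ga}$, $(1+x)^{q-\ga-1}$, with $q<\ga$ entering through integrability at infinity just as in the paper. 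Your decomposition is slightly more laborious but more robust — it does not use homogeneity, so it also covers the regularized kernels $k_\eps$ appearing later in the paper, for which the authors simply remark that $k_\eps\le |z|^{-\ga}$. You also go further than the paper in justifying the passage of derivatives onto $\om$ (the paper merely asserts the formula $u[\om]_x=-\int\om_x(y)|x-y|^{-\ga}dy$ with the integral ``absolutely convergent''); your excision-plus-integration-by-parts argument is a legitimate way to make that rigorous. The only point to make explicit in a final write-up is the small-$x$ regime, where the region $y\le x/2$ degenerates and the claim reduces to uniform boundedness of each integral for $0\le x\le 1$ — routine, and handled in the paper with the same brevity.
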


\begin{proof}
We estimate
\begin{align}
    |u[\om]|&\leq \|\om\|_{q}\int_{0}^\infty K(x, y) (1+y)^{q}dy\\
    &=\|\om\|_{q}\int_{0}^\infty x^{-\ga} K\left(1, \frac{y}{x}\right) x^{q} \left(x^{-1} +\frac{y}{x}\right)^{q}dy\\
    &=\|\om\|_{q}x^{1-\ga+q} \int_{0}^\infty K\left(1, z\right) (x^{-1}+z)^{q}dz
\end{align}
after making the substitution $y = x z$. Now note that for $x\geq 1$, $(x^{-1}+z)^{q}\leq (1+z^q)$, hence with
\begin{align}
    C := \int_{0}^\infty K\left(1, z\right) (1+z)^{q}dz 
\end{align}
the estimate $\sup_{x\geq 1} |u[\om]| x^{-(1-\ga+q)} \leq C \|\om\|_q$ holds. Note that $C < \infty$ on account of $0 < q <\ga$.  Straightforward computations show that $|u[\om](x)|\leq C \|\om\|_q$ for $0\leq x \leq 1$, hence the first line of \eqref{est_u} holds.
To continue with the second line of \eqref{est_u}, we first note 
\begin{align}
    u[\om]_x = -\int_{\R} \frac{\om_x(y)}{|x-y|^{\ga}}~dy = -\int_{0}^\infty \om_x(y)\left(|x-y|^{-\ga}+|x+y|^{-\ga}\right)~dy
\end{align}
where $\om_x$ is an even function and the integral is absolutely convergent. Similar estimations now show the second line and third line of \eqref{est_u}.
\end{proof}


\subsection{Proof of Theorem \ref{thm:shorttimeexist} (Local Existence and Uniqueness)}

The proof consists of two parts. We first construct global solutions of 
of an approximate problem. The following a-priori bounds are crucial:

\begin{proposition}\label{prop4}
Suppose $0 < p < \ga$, $\om_0 \in C^2(\R)$ is odd and
\begin{align}
    \|\om_0\|_{p}+\|\del_x \om_0\|_{p-1}+\|\del_{xx}\om_0\|_{p-2} < \infty.
\end{align}
Let $\om:[0, \infty)\times \R\to \R$ solve the equation $\om_t + v \om_x = 0$ where the velocity field satisfies
\begin{align}
    \|v(t, \cdot)\|_{1-\ga+p} \leq K\|\om(t, \cdot)\|_p, \\
    \|\del_x v(t, \cdot)\|_{-\ga+p} \leq K \|\om_x(t, \cdot)\|_{p-1}\\
    \|\del_{xx} v(t, \cdot)\|_{-\ga+p-1} \leq K \|\om_{xx}(t, \cdot)\|_{p-2}
\end{align}
for all $t \geq 0$ and some $K > 0$. Then there exists a time $T^*>0$ and a $C>0$ depending only on $\om_0$ and $K$ such that
\begin{align}\label{eq_statement}
    \sup_{0\leq t\leq T^*}\{ \|\om(t, \cdot)\|_{p}+\|\del_x \om(t, \cdot)\|_{p-1}
    + \|\del_{xx} \om(t, \cdot)\|_{p-2}\}\leq C < \infty
\end{align}
holds.
\end{proposition}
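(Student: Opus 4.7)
The plan is to run a standard bootstrap via the method of characteristics. Define the flow map $X(t, x)$ by $\dot X = v(t, X)$, $X(0, x) = x$; by oddness of $v$ (preserved under the evolution) we may restrict attention to $x \geq 0$, in which case $X(t, x) \geq 0$. Along characteristics, $\om(t, X(t, x)) = \om_0(x)$, so it suffices to control the flow $X$ and its first two spatial derivatives in order to recover bounds on $\om$, $\om_x$, and $\om_{xx}$.

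First I would derive a sublinear growth bound on characteristics. Setting $\xi(t) = 1 + X(t, x)$ and using the first hypothesis in the form $|v(t, X)| \leq K M(t)\,\xi^{1-\ga+p}$ with $M(t) := \|\om(t,\cdot)\|_p$, the ODE $\dot\xi = v(t, X)$ gives $\bigl|\frac{d}{dt}\xi^{\ga-p}\bigr| \leq (\ga-p) K M(t)$, hence
\begin{align}
\bigl|\xi(t)^{\ga-p} - \xi_0^{\ga-p}\bigr| \leq (\ga-p)\,K \int_0^t M(s)\,ds. \no
\end{align}
Since $\xi_0 \geq 1$, a smallness condition ensuring $(\ga-p) K \int_0^t M(s)\,ds \leq 1/2$ yields two-sided bounds $c_1 \xi_0 \leq \xi(t) \leq c_2 \xi_0$ uniformly in $x\geq 0$, with $c_1, c_2$ depending only on $\ga - p$. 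Transporting along characteristics then gives
\begin{align}
M(t) = \sup_{x\geq 0}\frac{|\om_0(x)|}{(1+X(t,x))^p} \leq \|\om_0\|_p \sup_{x\geq 0}\left(\frac{\xi_0}{\xi(t)}\right)^{p} \leq c_1^{-p}\,\|\om_0\|_p, \no
\end{align}
and a continuity argument closes this bound on some $[0, T^*]$ with $T^*$ depending only on $\|\om_0\|_p$, $K$, and $\ga-p$.

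For the first spatial derivative I use $\dot X_x = v_x(t, X)\, X_x$. Set $L(t) := \|\om_x(t,\cdot)\|_{p-1}$ and $h(t) := \int_0^t L(s)\,ds$. Since $\xi^{-\ga+p} \leq 1$ (using $p < \ga$ and $\xi \geq 1$), the second hypothesis yields $|v_x(t, X)| \leq K L(t)$, so $e^{-K h(t)} \leq X_x(t, x) \leq e^{K h(t)}$. Differentiating $\om(t, X) = \om_0(x)$ in $x$ gives $\om_x(t, X) X_x = \om_0'(x)$, whence
\begin{align}
L(t) \leq \|\om_0'\|_{p-1}\,\sup_{x\geq 0}\frac{(\xi/\xi_0)^{1-p}}{X_x(t,x)} \leq C_2\, e^{K h(t)}. \no
\end{align}
Integrating this Bernoulli-type inequality yields $h(t) \leq -K^{-1}\log(1 - K C_2 t)$, bounded on a possibly shorter time interval. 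An analogous argument for $\om_{xx}$, now differentiating the flow ODE twice in $x$ (so that $X_{xx}$ satisfies a linear ODE with inhomogeneity $v_{xx}(t, X) X_x^2$) and using the third hypothesis together with the already-established bounds on $X_x$ and $L$, closes the final estimate in \eqref{eq_statement}.

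The main obstacle is the self-referential nature of these bounds: the flow depends on $v$, $v$ is controlled by the weighted norms of $\om$, and those norms are in turn controlled by the flow. This is precisely the situation handled by a continuity/bootstrap argument on the candidate time interval where the smallness constraint holds. A secondary subtlety is that the weight $(1+x)^p$ is smallest at $x = 0$, so sublinear characteristic growth must not be allowed to collapse $\xi(t)$ below a uniform positive constant; this is built directly into the requirement $(\ga-p)K\int_0^t M(s)\,ds \leq 1/2$, which is tight precisely when $\xi_0 = 1$.
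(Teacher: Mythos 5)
Your proof is correct and, like the paper's, is a Lagrangian a priori estimate along characteristics; the difference is purely one of bookkeeping, but it is a real difference in how the estimate is organized. The paper never introduces the flow-map derivatives $X_x$, $X_{xx}$: it differentiates the weighted quantities $(1+X(t))^{-p}\om(t,X(t))$, $(1+X(t))^{1-p}\om_x(t,X(t))$, etc., directly in $t$, uses the hypotheses on $v$ together with $p<\ga$ (so that $(1+X)^{-\ga+p}\le 1$) to obtain closed Riccati-type differential inequalities for the norms themselves, e.g. a bound of the form $K\|\om(t,\cdot)\|_p^2$ on the time derivative, and reads off the local-in-time bound with no bootstrap. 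You instead bound the deformation of the flow — the two-sided comparison $c_1(1+x)\le 1+X(t,x)\le c_2(1+x)$ and the Jacobian bounds $e^{-Kh(t)}\le X_x\le e^{Kh(t)}$ — under a smallness constraint on $\int_0^t\|\om(s,\cdot)\|_p\,ds$ that you close by continuity, and then recover the weighted norms from the transport identities $\om(t,X)=\om_0$ and $\om_x(t,X)X_x=\om_0'$. Both routes exploit exactly the same exponent cancellation (the growth $(1+x)^{1-\ga+p}$ of $v$ against the weight), and both leave the second-derivative step as a sketch, as the paper itself does. Your version costs an extra continuity argument but yields quantitative control of the flow map and its Jacobian, which the paper's direct norm inequalities do not provide. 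One small point present in both formulations: the hypotheses constrain $v$ only through one-sided norms on $x\ge 0$, so one implicitly uses that trajectories starting in $x\ge 0$ stay there; in the application $v=v_\eps[\om]$ is odd, so this is harmless, but it is worth stating.
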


\begin{proof}
Along any particle trajectory $X(t)$ we compute 
\begin{align}
  &  \frac{d}{dt} \left((1+X(t))^{-p}\om(t, X(t))\right)\\
   &\qquad\qquad= -p (1+X(t))^{-p-1} v(t, X(t))\om(t, X(t))\\
    &\qquad\qquad\qquad+ (1+X(t))^{-p} \frac{d}{dt} \om(t, X(t))\\
    &\qquad\qquad=-p (1+X(t))^{-p-1} v(t, X(t))\om(t, X(t))\\
    &\qquad\qquad\leq (1+X(t))^{-p-1} K\|\om(t, \cdot)\|_p (1+X(t))^{1-\ga+p} |\om(t, X(t))|\\
     &\qquad\qquad=  K\|\om(t, \cdot)\|_p (1+X(t))^{-\ga+p} (1+X(t))^{-p} |\om(t, X(t))|\\
     &\qquad\qquad\leq  K \|\om(t, \cdot)\|_p^2 \sup_{x\geq 0} (1+x)^{-\ga+p}\\
    &\qquad\qquad\leq  K \|\om(t, \cdot)\|_p^2
\end{align}
where we have used $\ga > p$. A similar computation shows that
\begin{align}
    -\frac{d}{dt} \left((1+X(t))^{-p}\om(t, X(t))\right) \geq - K \|\om(t, \cdot)\|_p^2
\end{align}
and hence there exists a $T^*>0$ depending only on $\|\om_0\|_p, K$ such that $\|\om(t, \cdot)\|_p$ is bounded by a constant on $[0, T^*]$. To prove a similar bound for $\del_x \om(t, \cdot)$, we observe that
\begin{align}
    &\frac{d}{dt} \left((1+X(t))^{-p+1}\om_x(t, X(t))\right) \\
    &\quad=
    (-p+1) (1+X(t))^{-p} v(t, X(t)) \om_x(t, X(t)) \\
    &\quad\qquad- (1+X(t))^{-p+1}(\del_x v)(t, X(t)) \om_x(t, X(t)) \\
    &\quad\leq  K\|\om_x(t, \cdot)\|_{p-1} (1+X(t))^{-1}(1+X(t))^{1-\ga+p}\|\om(t, \cdot)\|_p \\
    &\quad\qquad+K (1+X(t))^{-p+1} (1+X(t))^{-\ga+p}\|\om_x(t,\cdot)\|_{p-1} |\om_x(t, X(t))|\\
    &\quad\leq K \|\om(t, \cdot)\|_p \|\om_x(t, \cdot)\|_{p-1}\\
    &\quad\qquad+ K (1+X(t))^{-p+1} (1+X(t))^{-\ga+p}\|\om_x(t,\cdot)\|_{p-1} (1+X(t))^{p-1}\|\om_x(t, X(t))\|_{p-1}\\
    &\quad\leq C K \left(\|\om(t, \cdot)\|_p \|\om_x(t, \cdot)\|_{p-1}+ \|\om_x(t,\cdot)\|_{p-1}^2\right)
\end{align}
with some universal $C>0$. A similar lower bound for $-\frac{d}{dt} (1+X(t))^{-p+1} \om_x$ exists. We hence get an a-priori bound for $\|\del_x \om(t,\cdot)\|_{p-1}$. A similar argument for $\del_{xx}\om$ completes \eqref{eq_statement}.
\end{proof}

We now define a family of regularized problems. Set
\begin{align}
    k_\eps(z) = \eta_\eps^{-\ga}(|z|)
\end{align}
where $\eta_\eps(z) = \eps \eta(z/\eps)$ with $\eta$ being a smooth, nonincreasing function with the properties
\begin{align}
    \eta(z) = \frac{3}{4} \quad (z\in [0, \frac{3}{4}])\\
    \eta(z) = z \quad (z \geq 1).
\end{align}
Now define for odd $\om$
\begin{align}
    v_\eps[\om] = -\int_\R k_\eps(x-y) \om(y,t)~dt
\end{align}
which can also be written as
\begin{align}\label{eq:u}
    v_{\eps}[\om](t, x) &= -\int_{0}^\infty \left(k_\eps(x-y)-k_\eps(x+y)\right)\om(t, y)~dy.
\end{align}
We note the following estimates:
\begin{align}\label{est_v}
\begin{split}
    \|v_\eps[\om]\|_{1-\ga+p}&\leq K\|\om\|_{p}\\
    \|\del_{x}v_\eps[\om]\|_{-\ga+p}&\leq K\|\om_x\|_{p-1}\\
    \|\del_{xx}v_\eps[\om]\|_{-\ga+p-1}&\leq K\|\om_{xx}\|_{p-2}.
\end{split}
\end{align}
with some universal $K > 0$. This is shown similarly as in Proposition \ref{prop_est_u}, noting that the regularized kernel $k_\eps$ is bounded by the original kernel $|z|^{-\ga}$. On the other hand, we have estimates of the form
\begin{align}\label{est_v1}
\begin{split}
    \|\del_{x}v_\eps[\om]\|_{-\ga+p}&\leq C(\eps)\|\om\|_{p}\\
    \|\del_{xx}v_\eps[\om]\|_{-\ga+p-1}&\leq C(\eps)\|\om\|_{p}.
\end{split}
\end{align}
with $C(\eps)\to \infty$ as $\eps\to 0$.

\begin{proposition}
For all $\eps > 0$, the regularized problems
\begin{align}\label{eq_reg}
    \om_t + v_\eps[\om] \om_x = 0, ~~~\om(0, x) = \om_0
\end{align}
have solutions $\om\in C^2([0,\infty)\times \R)$.
\end{proposition}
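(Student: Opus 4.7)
The strategy is to construct the solutions via Picard iteration and extend to $[0,\infty)$ by a continuation argument, exploiting that for fixed $\eps > 0$ the mollified kernel $k_\eps$ is smooth and bounded: since $\eta_\eps(z) \geq 3\eps/4$, we have $k_\eps \in C^\infty(\R)$ with $\|k_\eps\|_\infty \leq (3\eps/4)^{-\ga}$, and derivatives of $k_\eps$ are likewise bounded. I would set $\om^{(0)} \equiv \om_0$ and define $\om^{(n+1)}(t, x) := \om_0(X^{(n)}(0; t, x))$, where $X^{(n)}(s; t, x)$ is the backward characteristic flow of the smooth velocity $v_\eps[\om^{(n)}]$. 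The $\eps$-dependent estimates \eqref{est_v1}, together with \eqref{est_v}, guarantee that $v_\eps[\om^{(n)}]$ is $C^2$ in $x$ and globally Lipschitz, so the characteristic ODE is well-posed for all time by Picard--Lindel\"of.

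Short-time convergence of the iteration is obtained in the Banach space defined by the norm $\|\om\|_p + \|\om_x\|_{p-1} + \|\om_{xx}\|_{p-2}$. Using linearity of $\om \mapsto v_\eps[\om]$ together with \eqref{est_v}--\eqref{est_v1} and standard Gronwall-type comparisons for the characteristic flows, the map $\om^{(n)} \mapsto \om^{(n+1)}$ is a contraction on $[0, T_0]$ for sufficiently small $T_0 = T_0(\eps, \om_0)$. The fixed point yields a local $C^2$ solution. Since $k_\eps$ is even, the symmetric form \eqref{eq:u} implies $v_\eps[\om]$ is odd whenever $\om$ is odd; oddness is thus propagated through the iteration and inherited by the limit. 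Joint $C^2$-regularity in $(t, x)$ follows because $\om$ is transported by the smooth flow and $\om_t = -v_\eps[\om]\om_x$ inherits the required time regularity.

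To extend the solution to $[0, \infty)$, I would employ a continuation argument. Two facts control the solution on any compact time interval: (i) along characteristics $\om$ is preserved, so $\|\om(t, \cdot)\|_\infty = \|\om_0\|_\infty$; and (ii) the sub-linear growth bound $|v_\eps[\om](t, x)| \leq K\|\om(t, \cdot)\|_p (1+x)^{1-\ga+p}$ (note $1 - \ga + p < 1$) from \eqref{est_v} prevents the characteristics from reaching infinity in finite time. Combining these with the a priori bounds of Proposition \ref{prop4} (which apply to the regularized problem since the hypotheses there are exactly \eqref{est_v}) allows one to iterate the short-time existence on successive intervals, keeping the characteristic flow smooth throughout, hence producing a $C^2$ solution on $[0, \infty)\times \R$.

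The principal technical obstacle lies in the continuation step. The Riccati-type bound $\frac{d}{dt}\|\om\|_p \leq K\|\om\|_p^2$ in Proposition \ref{prop4} is only local in time, so the uniform-in-$\eps$ bounds hold only up to $T^*$. Extending the weighted bounds beyond $T^*$ for fixed $\eps > 0$ requires a bootstrap that invokes both the transport-preserved $L^\infty$ bound on $\om$ and the $\eps$-dependent smoothness of $k_\eps$ to close estimates on compact subsets in $x$. One should expect the bounds on $\|\om(t, \cdot)\|_p$ etc.\ to grow (potentially rapidly) with $t$, but finiteness on every compact time interval is all that is needed for the $C^2$-regularity claim, and this is where the argument at fixed $\eps$ departs substantively from the $\eps$-uniform estimates used later in the proof of Theorem \ref{thm:shorttimeexist}.
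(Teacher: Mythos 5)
Your construction is essentially the paper's: the paper also runs the particle--trajectory method, setting up a contraction for the flow map $\Phi=\Id+\widehat\Phi$ in a weighted $C^2$ space (your Picard iteration $\om^{(n+1)}=\om_0\circ (X^{(n)})^{-1}$ is the same fixed-point mechanism viewed on $\om$ rather than on $\Phi$), and it likewise obtains global-in-time existence for fixed $\eps$ from the $\eps$-dependent linear bound $\|\del_x v_\eps[\om]\|_{-\ga+p}\leq C(\eps)\|\om\|_p$, which replaces the quadratic Riccati estimate. One concrete imprecision in your continuation step: the claim $\|\om(t,\cdot)\|_\infty=\|\om_0\|_\infty$ is vacuous here, since $\om_0$ is only assumed to satisfy $\|\om_0\|_p<\infty$ with $p>0$ and may be unbounded; what transport actually preserves is $\om$ along trajectories, and you must combine this with the sub-linear bound on $v_\eps$ (so that $(1+\Phi(t,z))/(1+z)$ stays bounded above and below on compact time intervals) to conclude that $\|\om(t,\cdot)\|_p$ remains finite, after which the $C(\eps)$-linear bounds close the Gronwall argument for the higher norms.
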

\begin{proof}
The first step is to show the local-in-time existence of solutions using the \emph{particle trajectory method} (see \cite{MajdaBertozzi}). The flow map $\Phi=\Phi(t,z)$ satisfies the following equation: 
\begin{align*}
\frac{d\Phi}{dt}(t,z)=v_\eps[\om_\Phi](\Phi(t,z),t), \; \Phi(z,0)=z.
\end{align*}
or equivalently
\begin{align}\label{eq:Phi1}
\Phi(t,z)=z+ \int_0^t v_\eps[\om_\Phi(\cdot,s)](\Phi(s,z),s)~ds.
\end{align}
Here, for a given a flow map $\Phi$, we define 
\beq\label{eq:om2}
\om_\Phi(t,y):=\om_0(\Phi^{-1}(t,y)).
\eeq 
This means, \eqref{eq:Phi1} is an equation for $\Phi$ with
velocity field given by \eqref{eq:u} and $\om_\Phi$ given by
\eqref{eq:om2}. Moreover, a solution of \eqref{eq:Phi1} translates into a solution of \eqref{eq_reg} via relation \eqref{eq:om2}. 

We define the operator $\cG$ formally by
\beq
\cG_\eps[\Phi](x,t):= x+ \int_0^t v_{\eps}[\om_\Phi](\Phi(s,x),s)~ ds
\eeq
with $v_{\eps}$ defined by the expression \eqref{eq:u} where $\om_\Phi$ is given by \eqref{eq:om2}.
Then solving \eqref{eq_reg} is equivalent to solving the fixed point equation
\begin{align*}
\cG_\eps[\Phi] = \Phi.
\end{align*}
Next we need to introduce a suitable metric space on which $\cG$ is well defined and a contraction. To ease notation, we now fix $\eps>0$ and henceforth drop the subscript $\eps$.

\begin{definition}
Let $\cB$ be the set of all $\Phi\in C([0,T],C^2([0,\infty)))$
with the following properties:
\begin{align}
&\Phi(t,0) = 0 \label{prop_1} \\
&\Phi(t,\R^+)\subseteq \R^+.\label{prop_2}
\end{align}
Moreover, $\Phi$ is of the form
\begin{align}\label{prop_4}
\Phi=\Id+\widehat\Phi, \quad \|\widehat \Phi\|\le\zeta
\end{align}
where $\Id$ means the mapping $\Id(t,z)=z$ and
\begin{align}
\|\widehat\Phi\|:=\sup_{t\in [0,T]}\left(\|\widehat\Phi(t,\cdot)\|_{1-\ga+p}+\|\widehat\Phi_z(t,\cdot)\|_{-\ga+p} +\|\widehat\Phi_{zz}(t,\cdot)\|_{-1-\ga+p} \right).
\end{align}
$\cB$ is a complete metric space with metric
$$
d(\Id+\widehat \Phi,\Id+\widehat \Psi) = \|\widehat \Phi-\widehat \Psi\|.
$$
\end{definition}

Note that for sufficiently small $\zeta > 0$ and for any $\Phi\in \cB$ and $t\in[0,T]$ 
$$\Phi(t,\cdot):\R^+\to\R^+$$
is a diffeomorphism of $(0, \infty)$ onto $(0,\infty)$.
To show that, we first note that $\Phi(t,(0,\infty))\subset (0,\infty)$  by
\eqref{prop_2}. The derivative $\partial_{x}\Phi(t,x)$ is given by
$1-\partial_{x}\widehat \Phi(t,x)$ and is also uniformly bounded away from zero for small $\zeta>0$. We also see now that $\cG[\Phi]$ is well-defined.

The rest of the proof is standard. First one shows that for sufficiently small $\zeta,T$ that $\cG$ maps $\cB$ into $\cB$ and is a contraction. Note that the $\eps$-dependent estimates \eqref{est_v1} are crucial for the self-mapping and contraction properties. By the contraction mapping theorem there exists a unique solution $\Phi$ of \eqref{eq:Phi1} on some small time-interval $[0,T]$.

The local-in-time solution of the regularized problem (for any fixed $\eps>0$) is easily extended to $t\in [0,\infty)$ by standard arguments, noting that the a-priori bound
\begin{align}
    \|v_x\|_{-\ga+p} \leq C(\eps)\|\om\|_p 
\end{align}
holds independent of the length of the time interval $[0, T]$.
\end{proof}

In view of the bounds in Proposition \ref{prop4}, we can now complete the proof of Theorem \ref{thm:shorttimeexist} by using the Arzel\'{a}-Ascoli theorem on the sequence of solutions of the regularized problem as $\eps\to 0$. This gives a solution to \eqref{eq_main}, which is $C^1([0, T]\times \R)$.


\subsection{Proof of Theorem \ref{thm_MR} (Singularity Formation)}

\subsubsection{Preliminaries}

We need a few preliminary propositions first.

\begin{proposition}\label{prop1}
Let $a: [0, T)\mapsto \R$ be a smooth function with $a_0 := a(0) > 0$ and define
\begin{align}\label{def_phi1}
    \phi(t, x) = a^p(t) f\left(\frac{x}{a(t)}\right)
\end{align}
with $f(z) = (z+1)^p-1$. Then
\begin{align}\label{eq_repr_u}
    -u[\phi(t, \cdot)] = a(t)^{1-\ga + p}U\left(\frac{x}{a(t)}\right)
\end{align}
where
\begin{align}
    U(z) = \int_0^\infty \left(\frac{1}{|y-z|^\ga}-\frac{1}{|y+z|^\ga}\right) f(y)~dy.
\end{align}
Moreover,
\begin{itemize}
    \item $U'(0) > 0$
    \item $U(x) > 0, \quad (x > 0)$
    \item $U(x) \sim C x^{1-\ga+p}$ as $x\to \infty$ with some $C> 0$.
\end{itemize}
\end{proposition}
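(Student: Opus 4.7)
The first step is to derive the scaling identity directly from the representation $-u[\phi(t,\cdot)](x) = \int_0^\infty (|y-x|^{-\ga} - |y+x|^{-\ga}) a^p f(y/a)\, dy$: the substitution $y = a z$ factors out $a^{1-\ga+p}$ and leaves precisely the integral defining $U(x/a)$. Strict positivity $U(z) > 0$ for $z > 0$ is then immediate because the kernel $|y-z|^{-\ga} - |y+z|^{-\ga}$ is positive on $(0,\infty)^2$ (as $|y-z| < |y+z|$) and $f(y) = (y+1)^p - 1 > 0$ for $y > 0$, so the integrand is pointwise strictly positive.

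For the asymptotic $U(z) \sim C z^{1-\ga+p}$ as $z\to\infty$, I would rescale $y = zw$, factor out $z^{1-\ga+p}$, and show that the residual integrand $k(w)\bigl[(w+z^{-1})^p - z^{-p}\bigr]$, with $k(w) := |w-1|^{-\ga} - |w+1|^{-\ga}$, converges pointwise to $k(w) w^p$. Dominated convergence applies because $|k(w)|$ is controlled by $|w-1|^{-\ga}$ near $w=1$ and by $C w^{-1-\ga}$ at infinity (Taylor expanding $(1 \pm w^{-1})^{-\ga}$), so $|k(w)|(w+1)^p$ is integrable on $(0,\infty)$ precisely because $p < \ga < 1$. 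The limit constant $C = \int_0^\infty k(w) w^p\, dw$ is positive by the same sign argument.

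The delicate point, and the main obstacle, is $U'(0) > 0$, since naive differentiation under the integral sign produces a kernel behaving like $|y|^{-\ga-1}$ that is not locally integrable at the origin. My plan is to sidestep this by writing $f(zw) = p z \int_0^w (1+z\sigma)^{p-1}\, d\sigma$ and applying Tonelli's theorem to obtain $U(z) = p z^{2-\ga} \int_0^\infty (1+z\sigma)^{p-1} K(\sigma)\, d\sigma$ with $K(\sigma) := \int_\sigma^\infty k(w)\, dw$. A further rescaling $\sigma = t/z$ converts this into $U(z) = p z^{1-\ga} \int_0^\infty (1+t)^{p-1} K(t/z)\, dt$. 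Since $k\geq 0$ one has the uniform bound $K(\sigma) \leq C(1+\sigma)^{-\ga}$, whence $z^{-\ga} K(t/z) \leq C t^{-\ga}$; combined with the pointwise limit $z^{-\ga} K(t/z) \to 2 t^{-\ga}$ as $z \to 0^+$ (coming from the tail expansion $k(w)\sim 2\ga w^{-1-\ga}$), a final application of dominated convergence gives $z^{-1} U(z) \to 2 p \int_0^\infty (1+t)^{p-1} t^{-\ga}\, dt > 0$. Together with $U(0) = 0$, this yields $U'(0) > 0$.
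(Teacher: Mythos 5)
Your proposal is correct, and for three of the four claims (the scaling identity via $y=az$, positivity of $U$ from positivity of the kernel and of $f$, and the asymptotics $U(z)\sim Cz^{1-\ga+p}$ via the rescaling $y=zw$ and dominated convergence) it coincides with the paper's argument. The one place where you genuinely diverge is $U'(0)>0$. The paper simply asserts the formula $U'(0)=2\int_0^\infty |y|^{-\ga}f'(y)\,dy$ and observes $f'>0$; the implicit route to that formula is an integration by parts in $y$ (the boundary terms vanish because $f(0)=0$ and $f(y)|y\pm z|^{1-\ga}$ decays at infinity since $p<\ga$), after which the differentiated kernel is $|y-z|^{-\ga}+|y+z|^{-\ga}$, which is locally integrable, so differentiation under the integral sign is unproblematic. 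Your route instead writes $f(zw)=pz\int_0^w(1+z\sigma)^{p-1}\,d\sigma$, applies Tonelli to get $U(z)=pz^{1-\ga}\int_0^\infty(1+t)^{p-1}K(t/z)\,dt$ with $K(\sigma)=\int_\sigma^\infty k(w)\,dw$, and extracts $\lim_{z\to0^+}U(z)/z$ by dominated convergence using $K(\sigma)\le C(1+\sigma)^{-\ga}$ and $K(\sigma)\sim 2\sigma^{-\ga}$. This lands on $2p\int_0^\infty(1+t)^{p-1}t^{-\ga}\,dt$, which is exactly the paper's constant $2\int_0^\infty t^{-\ga}f'(t)\,dt$. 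Your version is longer but fully self-contained and avoids any discussion of principal values or of when differentiation under the integral is permitted; the paper's (implicit) integration-by-parts argument is shorter and produces the derivative formula for all $z$, not just at $z=0$. Both are valid; one minor remark is that your stated worry is slightly misplaced --- at $z=0$ the naive kernel $|y|^{-\ga-1}f(y)\sim p\,y^{-\ga}$ is actually integrable since $f$ vanishes linearly at the origin; the genuine obstruction to naive differentiation is the non-integrable odd singularity $|y-z|^{-\ga-1}$ at $y=z$ for $z>0$, which both your argument and the integration-by-parts argument successfully avoid.
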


\begin{proof}
We have
\begin{align}
    -u[\phi(t, \cdot)](x) &= a^p \int_{0}^\infty \left(\frac{1}{|y-x|^\ga}-\frac{1}{|x+y|^\ga}\right)f\left(\frac{y}{a}\right)~dy\\
    &=a^{1-\ga+p}\int_{0}^\infty \left(\frac{1}{|w-\frac{x}{a}|^\ga}-\frac{1}{|w+\frac{x}{a}|^\ga}\right)f(w)~dw\\
    &= a^{1-\ga+p} U\left(\frac{x}{a}\right).
\end{align}
after substituting $y = a w$ in the integral. Hence the representation \eqref{eq_repr_u} holds. From the form of $U$, we directly see that $U(x) > 0$, since the integrand is $>0$. To see that $U(x) \sim C x^{1-\ga+p}$ we compute
\begin{align}
    U(x) = x^{1-\ga+p}\left\{\int_0^\infty \left(\frac{1}{|z-1|^\ga}-\frac{1}{|z+1|^\ga}\right) \left(\left(z+\frac{1}{x}\right)^p - \frac{1}{x^p}\right)~dz\right\}
\end{align}
and note that the integral in curly brackets converges to a positive constant as $x\to \infty$, as can be seen using the dominated convergence theorem. To see $U'(0)>0$, we write
\begin{align}
    U'(0) = 2 \int_0^\infty|y|^{-\ga} f'(y)~dy
\end{align}
and note $f'(y) > 0$. 
\end{proof}

\begin{lemma} \label{lem1} Let $\om$ satisfy all the assumptions of Theorem \ref{thm_MR}.  Let $\bar T$ be the maximal life-time of the smooth solution $\om$. Then for all $t\in[0,\min\{\bar T, T(a_0)\})$ and all $x\geq 1$,
\begin{align}
    \om(t,x) &> \phi(t,x).
\end{align}
Moreover, there exists a $\delta>0$ so that $\om(t,x) > \phi(t,x)$ for $0\leq t\leq \delta, 0 < x < \infty$.
\end{lemma}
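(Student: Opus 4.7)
The plan is to exploit the transport structure of \eqref{eq_main}: values of $\om$ are preserved along the particle trajectories $\Phi(t,x_0)$ satisfying $\dot\Phi = u[\om](t,\Phi)$, $\Phi(0,x_0)=x_0$. The initial strict inequality $\om_0 > (1+\eps)\phi(0,\cdot)$, combined with the monotonicity $a(t)\leq a_0$ and the explicit form $\phi(t,x) = (x+a(t))^p - a(t)^p$, then propagates to $\om(t,x) > \phi(t,x)$ via three steps.

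\emph{Step 1 (geometry of the flow).} Since $\om$ is odd and smooth with $\om_0 > 0$ on $(0,\infty)$, the point $0$ is a fixed point of the flow, so $\Phi(t,\cdot)$ preserves $(0,\infty)$ and $\om(t,\cdot)>0$ on $(0,\infty)$ as long as the smooth solution exists. Using \eqref{eq_form1} and the non-negativity of the kernel $K$, this gives $u[\om](t,x) \leq 0$ for $x \geq 0$, hence $\Phi(t,x_0) \leq x_0$. Proposition \ref{prop_est_u} yields $|u[\om](t,x)| \leq C(1+x)^{1-\ga+p}$ with exponent $1-\ga+p = 1-\ga/2 < 1$, so a standard ODE comparison argument gives $\Phi(t,x_0)\to\infty$ as $x_0 \to \infty$; thus $\Phi(t,\cdot)$ is a $C^1$-diffeomorphism of $[0,\infty)$ onto itself.

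\emph{Step 2 (inequality for $x \geq 1$).} Fix $y\geq 1$ and $t\in [0,\min\{\bar T, T(a_0)\})$ and set $x_0 := \Phi^{-1}(t,y) \geq y \geq 1$. Conservation along trajectories and the initial bound give
\begin{align*}
\om(t,y) = \om_0(x_0) > (1+\eps)\phi(0,x_0) = (1+\eps)\bigl[(x_0+a_0)^p - a_0^p\bigr],
\end{align*}
while $y\leq x_0$, $a(t)\leq a_0$ and $a(t)^p\geq 0$ yield
\begin{align*}
\phi(t,y) = (y+a(t))^p - a(t)^p \leq (x_0+a_0)^p.
\end{align*}
Hence $\om(t,y)>\phi(t,y)$ will follow from $(1+\eps)[(x_0+a_0)^p-a_0^p] \geq (x_0+a_0)^p$, i.e.\ $\eps(x_0+a_0)^p \geq (1+\eps)a_0^p$. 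Because $x_0\geq 1$ and $a_0<1$ give $(x_0+a_0)^p\geq 1$, it suffices to verify $\eps(1-a_0^p)\geq a_0^p$, which is exactly \eqref{choice_eps}.

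\emph{Step 3 (inequality on $(0,\infty)$ for small time).} For $x\geq 1$ Step 2 applies immediately. For $0<x<1$ I would use a continuity argument: the Taylor expansion $\phi(0,x) = p a_0^{p-1} x + O(x^2)$ near the origin gives $\om_0(x)-\phi(0,x) > \eps \phi(0,x) \geq c_1 x$ on $(0,1]$ for some $c_1 > 0$. Using $u[\om](t,0)=0$ together with the bound on $\|\partial_x u[\om]\|_{-\ga+p}$ from Proposition \ref{prop_est_u} and the $C^1$ regularity of $\om$ from Theorem \ref{thm:shorttimeexist}, one gets $|\om(t,x)-\om_0(x)| + |\phi(t,x)-\phi(0,x)| \leq C t\, x$ uniformly on $[0,T]\times[0,1]$; choosing $\delta$ with $C\delta < c_1/2$ then produces the strict inequality on $[0,\delta]\times(0,1)$. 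The main obstacle is Step 1 — specifically the surjectivity of $\Phi(t,\cdot)$ onto $[0,\infty)$, without which the choice $x_0 = \Phi^{-1}(t,y)$ in Step 2 is unavailable — while everything else is essentially algebra, and condition \eqref{choice_eps} is precisely tuned so that the reduction $\eps(x_0+a_0)^p \geq (1+\eps) a_0^p$ is borderline at $x_0 = 1$.
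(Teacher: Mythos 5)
Your proposal is correct and follows essentially the same route as the paper: for $x\geq 1$ you transport the initial bound along trajectories, use that $u[\om]\leq 0$ forces $x_0=\Phi^{-1}(t,y)\geq y$, bound $\phi(t,y)\leq (x_0+a_0)^p$ via \eqref{eq_phi_less_than_x_plus_a_0}, and close with the margin \eqref{choice_eps}; for the short-time statement on $(0,1)$ you exploit that $\om_0-\phi(0,\cdot)$ has a linear lower bound $c_1x$ near the origin and that the time variation is $O(tx)$, which is a quantitative version of the paper's argument comparing $\del_x\om$ and $\del_x\phi$ at $x=0$ and integrating. The only added content is your explicit justification of the surjectivity of $\Phi(t,\cdot)$ via the sublinear growth $|u[\om]|\lesssim (1+x)^{1-\ga+p}$, a point the paper asserts without proof.
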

\begin{proof}
To prove the statement referring to $x\geq 1$, we first note that for any $t < \bar T, x \geq 1$, there exists a particle trajectory $t\mapsto X(t)$
\begin{align}
    \dot{X}(t) = u[\om(t, \cdot)](X(t), t), ~~X(0) = X_0
\end{align} 
such that $X(t) = x$. The assumptions of Theorem \ref{thm_MR} imply in particular that $\om(t, x)$ is always non-negative, so that $u[\om]$ is non-positive for $x>0$. Hence the particle trajectory originates from a point $X_0 \geq x\geq 1$ and we have, by \eqref{eq_phi_less_than_x_plus_a_0}, 
\begin{align*}
    \om(t, x) = \om(0, X_0) \geq (1+\eps)\phi(0, X_0) > (X_0+a_0)^p \geq (x+a_0)^p \geq \phi(t, x)
\end{align*}
since our choice of $\eps$ (see \eqref{choice_eps}) guarantees the inequality
\begin{align*}
    \frac{\eps}{1+\eps} > \frac{a_0^p}{(1+a_0)^p}
\end{align*}
implying $(1+\eps)\phi(0, X_0) > (X_0+a_0)^p$ for all $X_0 \geq 1$. 

To argue that the second statement of the Lemma holds, we show first the existence of an $0 <\delta_1$ and an $0 < b < 1$ such that $\om > \phi$ on $0\leq t \leq \delta, 0\leq x \leq b$. The assumption $\om(0, x) > (1+\eps) \phi(0, x)$ implies $\del_x \om(0, x) > \del_x \phi(0, x)$ for $x\in [0, b]$ for some small $b > 0$. Smoothness of $\om$ in time implies that $\del_x \om(t, x) > \del_x \phi(t, x)$ for $(t, x)\in [0, \delta_1]\times[0, b]$ and some $\delta_1 > 0$. Because of $\om(t, 0) = \phi(t, 0) = 0$, we then conclude by integrating with respect to $x$ that $\om(t, x) > \phi(t, x)$ on $[0, \delta_1]\times[0, b]$. To complete the proof of the second part of the proposition, we choose a $\delta_2 > 0$ such that $\om > \phi$ on $[0, \delta_2]\times [b, 1]$ and set $\delta := \min\{\delta_1, \delta_2\}$.
\end{proof}

\begin{lemma}\label{lem2}
Let all the assumptions of Theorem \ref{thm_MR} hold. Define a time $T^{*}$ by
\begin{align*}
    T^{*} = \sup\{ 0 \leq t < \min\{\bar T, T(a_0)\} : \om(\tau, x) > \phi(\tau, x)~\text{for all}~(\tau, x)\in [0, t]\times (0, \infty)\}.
\end{align*}
Suppose also for this proposition that
\begin{align}\label{ass_ineq_phi1}
    \phi_t + u[\phi]\phi_x < 0 \quad (x > 0).
\end{align}
Then if $T^{*} < \bar T$
\begin{align}\label{claim1}
    \del_{x}\om(T^{*}, 0) > \del_x \phi(T^{*}, 0).
\end{align}
As a consequence, there exists a $b > 0$ such that $\om(T^{*}, x)>\phi(T^{*}, x)$  for all $0 < x < b$.
\end{lemma}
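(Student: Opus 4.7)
The plan is to prove the derivative inequality \eqref{claim1} by contradiction, via a Hopf-lemma-style argument at $x=0$. Odd symmetry implies $u[\om](t,0)=0$, so the origin is a fixed point of the flow, and the comparison at $x=0$ is precisely the place where the estimates in Lemma \ref{lem1} fail.

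First, by definition of $T^*$ and continuity, $\om(T^*,\cdot)\ge\phi(T^*,\cdot)$ on $[0,\infty)$ with equality at $x=0$, so $g(t):=\om_x(t,0)-\phi_x(t,0)\ge 0$ for $t\in[0,T^*]$. I would first observe that the hypothesis $T^*<\bar T$ in fact forces $T^*<T(a_0)$: otherwise $\phi(T^*,x)=x^p$ would dominate any $C^1$ function vanishing at the origin for small $x>0$, contradicting $\om(T^*,\cdot)\ge\phi(T^*,\cdot)$. In particular $a(T^*)>0$ and $\phi_x(T^*,0)=p\,a(T^*)^{p-1}$ is finite and positive. Assume for contradiction that $g(T^*)=0$; since $g\ge 0$ on $[0,T^*]$, the left derivative must satisfy $\dot g(T^{*-})\le 0$.

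Next, I would compute $\dot g(T^*)$ from the equations. Differentiating $\om_t+u[\om]\om_x=0$ in $x$ and evaluating at $x=0$ (where $u[\om]$ vanishes) gives $\om_{tx}(t,0)=-u_x[\om](t,0)\om_x(t,0)$. For the barrier, $F(t,x):=\phi_t+u[\phi]\phi_x$ satisfies $F<0$ for $x>0$ by \eqref{ass_ineq_phi1} and $F(t,0)=0$ (since $\phi(t,0)=u[\phi](t,0)=0$ and direct computation gives $\phi_t(t,0)=0$). Hence $F$ has a right-sided maximum at $x=0$, so $F_x(T^*,0^+)\le 0$, which simplifies to $\phi_{tx}(T^*,0)\le -u_x[\phi](T^*,0)\phi_x(T^*,0)$. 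Combining with the contradiction hypothesis $\om_x(T^*,0)=\phi_x(T^*,0)=:m>0$ yields
\[
\dot g(T^*)\ \ge\ -m\,u_x[\om-\phi](T^*,0).
\]
The crux is showing this is strictly positive. For odd $h:=\om-\phi$, the formula $u_x[h](0)=-2\int_0^\infty y^{-\ga}h_y(y)\,dy$ combined with integration by parts (with boundary terms vanishing because $h(y)y^{-\ga}\to 0$ at $y=0^+$ thanks to $h(0)=0$ and $1-\ga>0$, and at $y=\infty$ thanks to $\|\om(T^*,\cdot)\|_p,\|\phi(T^*,\cdot)\|_p<\infty$ together with $p<\ga$) gives
\[
u_x[\om-\phi](T^*,0)\ =\ -2\ga\int_0^\infty y^{-\ga-1}(\om-\phi)(T^*,y)\,dy\ <\ 0,
\]
where strict negativity uses $\om-\phi\ge 0$ on $(0,\infty)$ together with $\om-\phi>0$ on $[1,\infty)$ from Lemma \ref{lem1}. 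Hence $\dot g(T^*)>0$, contradicting $\dot g(T^{*-})\le 0$ and establishing \eqref{claim1}.

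The second statement of the lemma then follows from a $C^2$ Taylor expansion at $x=0$: $(\om-\phi)(T^*,x)=(\om_x-\phi_x)(T^*,0)\,x+O(x^2)$ is strictly positive for small $x>0$ by \eqref{claim1}. The main obstacle I anticipate is rigorously justifying the Hopf-type inequality $F_x(T^*,0^+)\le 0$: this requires verifying that the nonlocal quantity $u[\phi]$, and hence $F$, admits a well-defined one-sided derivative as $x\to 0^+$, despite the odd extension of $\phi$ being only $C^1$ across the origin.
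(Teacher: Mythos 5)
Your argument is correct, but it proves \eqref{claim1} by a genuinely different mechanism than the paper. The paper never argues by contradiction at the contact time: it derives the differential inequality $\frac{d}{dt}\ln\om_x(t,0)\ge\frac{d}{dt}\ln\phi_x(t,0)>0$ on all of $[0,T^*)$ (combining $\om\ge\phi\Rightarrow -\del_xu[\om](t,0)\ge-\del_xu[\phi](t,0)$ with the one-sided bound $\del_t\phi_x(t,0)\le-\del_xu[\phi](t,0)\phi_x(t,0)$ coming from \eqref{ass_ineq_phi1}), and then obtains strictness at $T^*$ by integrating from small positive times, where $\om_x(t,0)>\phi_x(t,0)$ is inherited from the initial data via Lemma \ref{lem1}. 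You instead set $g(t)=\om_x(t,0)-\phi_x(t,0)$, assume $g(T^*)=0$, and run a Hopf-type contact argument at the single time $T^*$: the same two ingredients give $\dot g(T^*)\ge -m\,u_x[\om-\phi](T^*,0)$, and the strictness is extracted not from the initial data but from the nonlocal operator, via $u_x[\om-\phi](T^*,0)=-2\ga\int_0^\infty y^{-1-\ga}(\om-\phi)(T^*,y)\,dy<0$, using $\om-\phi\ge0$ everywhere and $>0$ on $[1,\infty)$ (Lemma \ref{lem1}); this contradicts $\dot g(T^{*-})\le0$. Both routes are sound. The paper's version yields the quantitative growth statement \eqref{eq_ineq_om_x_phi_x} as a byproduct and avoids the integration by parts at $y=0$ and $y=\infty$ (which you do justify correctly using $h(0)=0$, $1-\ga>0$ and $p<\ga$); your version is more self-contained at time $T^*$, does not rely on propagating the strict slope inequality from $t\approx 0$, and makes explicit the useful observation that $T^*<\bar T$ forces $T^*<T(a_0)$ (so $\phi_x(T^*,0)$ is finite), a point the paper uses only implicitly here and establishes separately at the end of the proof of Theorem \ref{thm_MR}. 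The one step you flag as a potential obstacle --- the existence of $\del_xu[\phi](t,0^+)$ --- is unproblematic: Proposition \ref{prop1} gives $-u[\phi]=a^{1-\ga+p}U(x/a)$ with $U'(0)>0$ computed explicitly, so $F$ does have a one-sided derivative at $x=0$.
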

\begin{proof}

The supremum defining $T^{*}$ is $>0$ because of Lemma \ref{lem1}. 
The equation \eqref{eq_main} and $u[\om](t, 0) = 0$ imply for short times
\begin{align}\label{eq_om_x_log}
    \frac{d}{dt} \ln \del_x \om(t, 0) = - (\del_x u[\om])(t, 0). 
\end{align}
because $\del_x \om(t, 0) > 0$ for small $t>0$. Observe that $\om(t, \cdot) > \phi(t, \cdot)$ for all $t < T^*$. Using this, we get for $t < T^{*}$
\begin{align}
     - (\del_x u[\om])(t, 0) &= - \lim_{x\to 0^+}\frac{u[\om](t, x)}{x}
     \geq \lim_{x\to 0^+} \frac{1}{x}\int_{0}^\infty K(x, y) \phi(t, y)~dy\\
    &\geq - \lim_{x\to 0^+}\frac{u[\phi](t, x)}{x} = - (\del_x u[\phi])(t, 0).\label{eq_prop_aux1}
\end{align}
Moreover the assumption \eqref{ass_ineq_phi} implies, on account of $\phi_t(t, 0) = 0$ and \eqref{ass_ineq_phi1},
\begin{align*}
\frac{1}{x} \int_{0}^x \del_x \phi_t(t, y)~dy = \frac{1}{x} \phi_{t}(t, x) < - \frac{u[\phi](t, x) \phi_x(t, x)}{x} 
\end{align*}
from which by taking the limit $x\to 0$ and using $u[\phi](t, 0) = 0$ we get 
\begin{align}\label{eq_prop_aux2}
    \del_{t} \phi_x(t, 0) \leq - (\del_x u[\phi])(t, 0) \phi_x(t, 0).
\end{align}
Combining \eqref{eq_om_x_log}, \eqref{eq_prop_aux1} and \eqref{eq_prop_aux2}, we get for small $t>0$
\begin{align}\label{eq_ineq_om_x_phi_x}
    \frac{d}{dt}\ln \om_{x}(t, 0) \geq \frac{d}{dt} \ln \phi_{x}(t, 0) > 0.
\end{align}
The inequality \eqref{eq_ineq_om_x_phi_x} remains valid as long as $\om_{x}(t, 0) > 0$. By direct calculation, one finds that $\frac{d}{dt} \ln \phi_{x}(t, 0) > 0$ for all $t < T(a_0)$ and hence the inequality holds up to $T^{*}$. By taking into account that $\om_x(t, 0) > \phi_x(t, 0)$ for small positive $t>0$ and integrating \eqref{eq_ineq_om_x_phi_x} up to $T^*$ we arrive at \eqref{claim1}.
\end{proof}

\begin{proposition}\label{prop_main}
 Let $a_0, \phi$ be as in Proposition \ref{prop1}. 
Suppose $\om$ is a smooth, odd solution of \eqref{eq_main} and that 
all the assumptions of Theorem \ref{thm_MR} hold. 
 Suppose moreover  for now  that
\begin{align}\label{ass_ineq_phi}
    \phi_t + u[\phi]\phi_x < 0 \quad (x > 0).
\end{align}
Then $\om(t, x) > \phi(t, x)$ for all $x > 0$ and  for times $t < \min\{\bar T, T(a_0)\} $.
\end{proposition}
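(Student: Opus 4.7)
The plan is a barrier comparison argument by contradiction, using the time $T^{*}$ defined in Lemma \ref{lem2}. Specifically, I would assume that $T^{*} < \min\{\bar T, T(a_0)\}$ and produce a contradiction by tracking the difference $D(t, x) := \om(t, x) - \phi(t, x)$ along an $\om$-trajectory through a first contact point.

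First, by continuity of $\om$ and $\phi$ in $t$ and the definition of $T^{*}$, one has $D(T^{*}, x) \geq 0$ for all $x > 0$, and there must exist some point $x_0 > 0$ with $D(T^{*}, x_0) = 0$; otherwise strict positivity at $T^{*}$ combined with the compactness-and-continuity analysis from Lemmas \ref{lem1}-\ref{lem2} would allow $T^{*}$ to be extended, violating the supremum definition. Lemma \ref{lem1} guarantees $D(T^{*}, x) > 0$ for $x \geq 1$, while Lemma \ref{lem2} (applicable since $T^{*} < \bar T$) gives $\del_x\om(T^{*}, 0) > \del_x\phi(T^{*}, 0)$, hence $D(T^{*}, x) > 0$ for $0 < x < b$ with some $b > 0$. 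Therefore the contact point satisfies $x_0 \in [b, 1]$, and a minimum argument yields $\del_x D(T^{*}, x_0) = 0$, i.e.\ $\om_x(T^{*}, x_0) = \phi_x(T^{*}, x_0)$.

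Next, let $X(t)$ denote the $\om$-trajectory with $X(T^{*}) = x_0$. Along this trajectory, $\frac{d}{dt}\om(t, X(t)) = 0$ by \eqref{eq_main}, while $\frac{d}{dt}\phi(t, X(t)) = \phi_t + u[\om]\phi_x$. Hence
\begin{align}
\frac{d}{dt} D(t, X(t))\Big|_{t=T^{*}} = -\phi_t(T^{*}, x_0) - u[\om](T^{*}, x_0)\,\phi_x(T^{*}, x_0).
\end{align}
Rewriting this as $-(\phi_t + u[\phi]\phi_x) + (u[\phi]-u[\om])\phi_x$ at $(T^{*}, x_0)$, the first term is strictly positive by the standing assumption \eqref{ass_ineq_phi}, and the second is nonnegative: since $\om(T^{*},\cdot) \geq \phi(T^{*},\cdot) \geq 0$ on $(0,\infty)$ and the kernel $K(x,y) \geq 0$ in \eqref{eq_form1}, the representation gives $u[\phi] - u[\om] \geq 0$, while $\phi_x(T^{*}, x_0) > 0$ by direct differentiation of \eqref{def_phi1}.

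Thus $\frac{d}{dt}D(t, X(t))|_{t=T^{*}} > 0$. But $D(t, X(t)) > 0$ for $t < T^{*}$ and $D(T^{*}, X(T^{*})) = 0$ force the left derivative to be $\leq 0$, giving the contradiction and hence $T^{*} \geq \min\{\bar T, T(a_0)\}$. The main technical obstacle is actually concentrated in Lemma \ref{lem2} (already established), which rules out the contact point accumulating at $x = 0$; without it, the comparison principle could fail at the origin where $\om$ and $\phi$ both vanish. The rest is a clean maximum-principle argument exploiting the sign of the Biot-Savart kernel.
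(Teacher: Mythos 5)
Your proposal is correct and follows essentially the same route as the paper: both arguments locate a first contact point $x^*\in[b,1]$ at time $T^{*}$ via Lemmas \ref{lem1} and \ref{lem2}, follow the $\om$-trajectory through it, and derive the contradiction from the sign of $u[\phi]-u[\om]$ (monotonicity of the kernel) together with \eqref{ass_ineq_phi}. Your rearrangement in terms of $D=\om-\phi$ and the (unused) observation $\del_x D(T^{*},x_0)=0$ are cosmetic differences only.
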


\begin{proof} 
Define $T^{*}$ as in Lemma \ref{lem2} and assume that the conclusion of the Proposition is false, i.e. $T^{*} < \min\{\bar T, T(a_0)\}$. Then there exists a sequence $(\tau_n, x_n)$ with $\tau_n \to T^{*}$, $\om(\tau_n, x_n) \leq \phi(\tau_n, x_n)$ and by Lemmas \ref{lem1} and \ref{lem2}, $0 < b \leq x_n \leq 1$. By passing to a subsequence, we have $x_n \to x^{*}\in (b, 1]$.
As a consequence, we have $\om(T^{*}, x^*) = \phi(T^{*}, x^*)$ for some $x^* > 0$.

Let $X(t)$ denote any particle trajectory defined by
\begin{align}
    \dot{X}(t) = u(X(t), t), ~~X(0) = X_0
\end{align}
where $X_0 > 0$ and such that $X(T^{*}) = x^*$.  Observe that 
\begin{align*}
    \left.\frac{d}{dt}\om(t, X(t))\right|_{t=T^{*}} \leq \left.\frac{d}{dt} \phi(t, X(t))\right|_{t=T^{*}}
\end{align*}
since otherwise by backtracking the trajectory we see that at all times $T^{*}-\eta$ with small $\eta>0$ and positions $X(T^*-\eta)$,  $\om(T^*-\eta, X(T^*-\eta)) < \phi(T^*-\eta, X(T^*-\eta))$ holds, in contradiction to the definition of $T^*$.  
Then,
\begin{align}
    &0 = \left.\frac{d}{dt}\om(t, X(t))\right|_{t=T^{*}} \leq \left.\frac{d}{dt} \phi(t, X(t))\right|_{t=T^{*}} = \left.(\phi_t + u[\om]\phi_x) \right|_{t=T^{*}}\\
    &\leq (\phi_t + u[\phi] \phi_x)(T^{*}, x^*) < 0 
\end{align}
where we have used $\om(T^{*}, x) \geq \phi(T^{*}, x)$ for all $x\geq 0$ to conclude
\begin{align}
    u[\om]\leq u[\phi].
\end{align}
Hence in summary we get at $(T^{*}, x^*)$ the relationship
\begin{align}
    0 = (\phi_t + u[\phi] \phi_x)(T^{*}, x^*) < 0, 
\end{align}
a contradiction.
\end{proof}

\begin{proposition}\label{prop2}  Let $p = \frac{1}{2}\gamma$.
There exists a positive constant $c > 0$ such that
\begin{align}
    0 < c \leq \frac{U(z) f'(z)}{-p f(z) + z f'(z)}\quad (z > 0)
\end{align}
\end{proposition}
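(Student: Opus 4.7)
My plan is to reduce the statement to a continuity/limit argument on $(0,\infty)$, where the key point is that the exponent $p = \gamma/2$ is chosen precisely so that the ratio has a finite positive limit at infinity.

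First I would simplify the denominator. Since $f(z) = (z+1)^p - 1$ and $f'(z) = p(z+1)^{p-1}$, a direct computation gives
\begin{align}
    -p f(z) + z f'(z) = p\bigl[1 - (z+1)^{p-1}\bigr],
\end{align}
which is strictly positive for $z>0$ because $p \in (0,1)$. Combined with $U(z)>0$ and $f'(z)>0$ from Proposition~\ref{prop1}, the ratio
\begin{align}
    R(z) := \frac{U(z) f'(z)}{-p f(z) + z f'(z)}
\end{align}
is well-defined, continuous and strictly positive on $(0,\infty)$. It therefore suffices to show $\liminf_{z\to 0^+} R(z) > 0$ and $\liminf_{z\to \infty} R(z) > 0$, since then the infimum of a positive continuous function with positive limits at both endpoints of $(0,\infty)$ is positive.

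For the behavior as $z \to 0^+$, I would use that $U$ is smooth near $0$ with $U(0)=0$ and $U'(0) > 0$ (Proposition~\ref{prop1}), so $U(z) = U'(0) z + O(z^2)$. At the same time $f'(z) \to p$ and a Taylor expansion gives $-pf(z) + zf'(z) = p(1-p)z + O(z^2)$. Hence
\begin{align}
    \lim_{z\to 0^+} R(z) = \frac{U'(0)\, p}{p(1-p)} = \frac{U'(0)}{1-p} > 0.
\end{align}

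For the behavior as $z \to \infty$, this is where the choice $p = \gamma/2$ plays the essential role. From Proposition~\ref{prop1}, $U(z) \sim C z^{1-\gamma+p}$, and $f'(z) = p(z+1)^{p-1} \sim p\, z^{p-1}$, so the numerator behaves like $C p\, z^{2p - \gamma}$. The denominator $p[1 - (z+1)^{p-1}]$ tends to $p$ as $z\to\infty$ since $p<1$. Plugging in $p = \gamma/2$ makes the exponent $2p-\gamma$ vanish, and we obtain
\begin{align}
    \lim_{z\to \infty} R(z) = C > 0.
\end{align}
Combining the three observations finishes the proof. The only step requiring care is the asymptotic identification of $U(z)$ for small and large $z$, but both were already established in Proposition~\ref{prop1}, so the argument reduces to bookkeeping of powers. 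The substantive point the proposition is really recording is the criticality of the exponent $p = \gamma/2$ for the $\alpha$-patch model, manifested in the finite positive limit of $R$ at infinity.
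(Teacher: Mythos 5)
Your proof is correct and follows essentially the same route as the paper: positivity and a linear-order expansion of numerator and denominator near $z=0$, the asymptotics $U(z)\sim Cz^{1-\gamma+p}$ and $f'(z)\sim p z^{p-1}$ at infinity (where $p=\gamma/2$ makes the exponents cancel), and continuity plus positivity on the compact middle range. Your explicit simplification $-pf(z)+zf'(z)=p\bigl[1-(z+1)^{p-1}\bigr]$ is a nice touch that makes the positivity of the denominator immediate, but it does not change the structure of the argument.
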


\begin{proof}
We calculate $-p f(z) + z f'(z) = -p ((z+1)^p - 1) + z p (z+1)^{p-1}$ and note that 
$-p f(z) + z f'(z) > 0$ for all $z > 0$. Now observe that by Proposition \ref{prop1}, 
$U(z)\sim c_1 z$ for  small $z > 0$ with  some $c_1>0$ and furthermore that $-p f(z) + z f'(z)\sim p(1-p) z$ for small $z$. Hence there exists a $z_1 > 0$ such that
\begin{align}
c_2 \leq  \frac{U(z) f'(z)}{-p f(z) + z f'(z)} \quad (0 < z \leq z_1)
\end{align}
for some $c_2 >0$. As $z\to \infty$, $U(z)\sim C z^{1-\ga+p}, f'(z)\sim z^{p-1}$ and $-p f(z) + z f'(z) \to p$. Using again  $p = \half \ga$,  we conclude the existence of  an $z_2>z_1$ so that
\begin{align}
c_3 \leq  \frac{U(z) f'(z)}{-p f(z) + z f'(z)} \quad ( z_2 \leq z).
\end{align}
 for some $c_3 > 0$. 
The statement of the Proposition now follows since $\frac{U(z) f'(z)}{-p f(z) + z f'(z)}$ is continuous in $z$ and never zero in $[z_1, z_2]$.
\end{proof}

\subsubsection{Proof of Theorem \ref{thm_MR}}

We now turn to Theorem \ref{thm_MR}. We need to check the following:
$\phi(t, x)$ satisfies
    \begin{align}\label{cond_1}
       \phi_t + u[\phi]\phi_x < 0\quad (x>0).
     \end{align}
To prove this, we first compute the left hand side using \eqref{def_phi} and Proposition \ref{prop1}:
\begin{align}
    \phi_t + u[\phi]\phi_x = \left[\dot{a} a^{p-1} (p f(z) - z f'(z)) - a^{2p-\ga} U(z) f'(z)\right]_{z=\frac{x}{a}}
\end{align}
and so $\phi_t + u[\phi]\phi_x < 0$ is equivalent to
\begin{align}\label{eq_aux}
    (-\dot{a}) < a^{1-\ga+p} \frac{ U(z) f'(z)}{-p f(z) + z f'(z)}
\end{align}
for all $z > 0$. By Proposition \ref{prop2}, \eqref{eq_aux} is implied by 
\begin{align}
(-\dot{a}) < c a^{1-\ga+p} 
\end{align}
so $\dot{a} = - \half c a^{1-p}$ is sufficient for \eqref{cond_1} to hold,  since $p =\half \ga$. By applying Proposition \ref{prop_main}, we see that $\om > \phi$ as long as $t < \min\{\bar T, T(a_0)\}$. Note that now necessarily $\bar T\leq T(a_0)$, since in the case $T(a_0) < \bar T$ the inequality $\om(T(a_0), x) \geq \phi(T(a_0), x) = x^p$ and $\om(t, 0) = \phi(t, 0)$ would imply that $\om$ has an infinite slope at $t = T(a_0), x=0$. This completes the proof of Theorem \ref{thm_MR}.

\section{Acknowledgements}
The authors would like to thank the anonymous reviewer for a careful reading of this manuscript, many helpful comments, and in particular for pointing out a gap in the first version. We would also like to thank D. Li for very helpful comments on a preprint version of this paper, indicating a related gap. Vu Hoang wishes to thank the National Science Foundation for support under grants NSF DMS-1614797 and NSF DMS-1810687.

%
%
%

\end{document}